\newtheorem{thmintro}{Theorem}
\newtheorem{corintro}[thmintro]{Corollary}
\newtheorem{theorem}{Theorem}[section]
\newtheorem{corollary}[theorem]{Corollary}
\newtheorem{lemma}[theorem]{Lemma}
\newtheorem{prop}[theorem]{Proposition}
\theoremstyle{definition}
\newtheorem{example}[theorem]{Example}
\newtheorem{definition}[theorem]{Definition}
\newcommand{\NN}{\mathbb{N}}
\newcommand{\RR}{\mathbb{R}}
\newcommand{\CC}{\mathbb{C}}
\newcommand{\ZZ}{\mathbb{Z}}
\newcommand{\HHH}{\mathcal{H}}
\newcommand{\ZZZ}{\mathcal{Z}}
\newcommand{\inv}{^{-1}}
\newcommand{\co}{\colon\thinspace}
\DeclareMathOperator{\Ch}{Ch}
\DeclareMathOperator{\dist}{d}
\DeclareMathOperator{\dc}{\dist_{\Ch}}
\DeclareMathOperator{\Int}{int}
\DeclareMathOperator{\Pc}{Pc}
\DeclareMathOperator{\proj}{proj}
\DeclareMathOperator{\Stab}{Stab}
\DeclareMathOperator{\supp}{supp}
\DeclareMathOperator{\Hom}{Hom}
\DeclareMathOperator{\End}{End}
\numberwithin{equation}{section}
\begin{document}

\renewcommand{\proofname}{{\bf Proof}}

\title[On the centre of Iwahori-Hecke algebras]{On the centre of Iwahori-Hecke algebras}

\author[T.~Marquis]{Timoth\'ee \textsc{Marquis}$^*$}
\address{UCLouvain, IRMP, 1348 Louvain-la-Neuve, Belgium}
\email{timothee.marquis@uclouvain.be}
\thanks{$^*$F.R.S.-FNRS Research Associate}

\author[S.~Raum]{Sven Raum$^\dagger$}
\address{Department of Mathematics, Stockholm University, SE-106 91 Stockholm, Sweden}
\email{raum@math.su.se}
\thanks{$^\dagger$ supported by the Swedish Research Council through grant number 2018-04243}


\dedicatory{In memory of Jacques Tits}

\begin{abstract}
  We prove triviality of the centre of arbitrary Hecke algebras of irreducible non-finite non-affine type.  This result is obtained as a consequence of the following structure result for conjugacy classes of the underlying Coxeter groups. If $W$ is any infinite irreducible Coxeter group and $w \in W$ is a nontrivial element that is assumed not be a translation in case $W$ is affine, then there is an infinite sequence of conjugates of $w$ by Coxeter generators whose length is non-decreasing and tends to infinity.
\end{abstract}



\maketitle


\section{Introduction}

Hecke algebras are deformations of group algebras associated with a Coxeter system and suitable deformation parameters.  Particular attention has been paid to the class of Hecke algebras associated to finite or affine Coxeter groups.  These efforts were motivated by representation theory of finite groups of Lie type and $p$-adic reductive groups.  In the spherical case, Tits' deformation theorem shows that the complex Hecke algebra does not depend on the deformation parameter \cite[Chapitre IV, Exercise 27]{Bourbaki}, and in particular provides a calculation of the centre's dimension, which equals the number of irreducible complex representations of the Coxeter group. In the affine case, a key structural theorem proved by Bernstein and presented in \cite{lus89} shows that the centre of an affine Hecke algebra is finitely generated and that the algebra itself is a finitely generated module over its centre.

Due to a lack of established methods, for example from algebraic geometry and the combinatorics of root systems, Hecke algebras associated with infinite non-affine Coxeter groups have received less attention than their counterparts arising from finite and reductive groups. In particular, it remained an open problem to determine the centre of Hecke algebras for such types. In this article, we completely clarify this question, proving triviality of the centre of Hecke algebras (in the sense that it equals the base ring) and hence --- combined with Tits' and Bernstein's theorems --- establish a dichotomy between finite and affine types on the one hand and arbitrary indefinite types (i.e. non-finite non-affine types) on the other hand. This result can be viewed as a vast generalisation of the classical fact, whose proof is based on the work of Jacques Tits, that infinite irreducible Coxeter groups have trivial centre \cite[Section V.4, Exercise 3]{Bourbaki}.

\begin{thmintro}\label{thmintro:Hecke-generic}
  Let $(W,S)$ be a Coxeter system of irreducible indefinite type, let $R$ be a ring and $(a_s,b_s)_{s \in S}$ a deformation parameter.  Then the centre of the generic Hecke algebra $\HHH(W, S, (a_s, b_s)_{s \in S})$ is trivial. 
\end{thmintro}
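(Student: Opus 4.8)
We must show $Z(\HHH)=R\cdot 1$, where $\HHH:=\HHH(W,S,(a_s,b_s)_{s\in S})$. Since $\HHH$ is a free $R$-module with basis $(T_w)_{w\in W}$, every $z\in Z(\HHH)$ is a \emph{finite} combination $z=\sum_{w\in W}c_wT_w$; subtracting the central element $c_1\cdot 1$, we may assume $c_1=0$, and it suffices to prove $z=0$. Suppose not. Then $\supp(z):=\{w\in W:c_w\neq 0\}$ is a finite nonempty subset of $W\setminus\{1\}$; set $M:=\max_{w\in\supp(z)}\ell(w)\ge 1$ and $D:=\{w\in\supp(z):\ell(w)=M\}$.

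The next step is to exploit the relations $T_sz=zT_s$, $s\in S$. Expanding with $T_sT_w=T_{sw}$ if $\ell(sw)>\ell(w)$ and $T_sT_w=a_sT_w+b_sT_{sw}$ otherwise, together with the right-handed analogues, and equating the coefficient of $T_v$ on both sides yields, for all $s\in S$ and $v\in W$: \textbf{(A)} if $s$ is a descent of $v$ on both sides, then $c_{sv}=c_{vs}$; \textbf{(B)} if $s$ is an ascent of $v$ on both sides, then $b_s(c_{sv}-c_{vs})=0$, whence $c_{sv}=c_{vs}$ since the $b_s$ are units; \textbf{(C)} if $s$ is a left descent and a right ascent of $v$, then $c_{sv}+a_sc_v=b_sc_{vs}$ (and symmetrically). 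Now work at the top of the length filtration, where $c_u=0$ whenever $\ell(u)>M$: for $v$ of length $M+1$ with $\ell(sv)=M$ one has $\ell(vs)=M+2$ and $c_v=c_{vs}=0$, so (C) forces $c_{sv}=0$; equivalently, \emph{no element $w$ of $D$ admits a generator $s$ with $\ell(sws)=\ell(w)+2$}, i.e. conjugation by a single generator never lengthens an element of $D$. Identities (A) and (B), applied to elements of length $M\pm 1$, will move coefficients around inside $D$.

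To conclude, pick $w\in D$ and apply the structure result for conjugacy classes: since $W$ is infinite and irreducible and, being of indefinite type, not affine (so the translation proviso is vacuous), and $w\neq 1$, there exist $w=v_0,v_1,v_2,\dots$ with $v_{i+1}=s_iv_is_i$ for suitable $s_i\in S$, with $\ell(v_0)\le\ell(v_1)\le\cdots$ and $\ell(v_i)\to\infty$. I claim $v_i\in D$ for all $i$. Indeed, if $v_i\in D$ then $\ell(v_{i+1})\le\ell(v_i)=M$ by the previous paragraph, while $\ell(v_{i+1})\ge\ell(v_i)$ by monotonicity, so $\ell(v_{i+1})=M$ and the step preserves length. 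If $s_i$ is an ascent of $v_i$ on the left (resp. on the right), then $v:=s_iv_i$ (resp. $v:=v_is_i$) has length $M+1$, has $s_i$ as a descent on both sides, and satisfies $\{s_iv,vs_i\}=\{v_i,v_{i+1}\}$, so (A) gives $c_{v_i}=c_{v_{i+1}}$; and if $s_i$ is a descent of $v_i$ on both sides, then $u:=s_iv_i$ has length $M-1$, has $s_i$ as an ascent on both sides, and satisfies $\{s_iu,us_i\}=\{v_i,v_{i+1}\}$, so (B) gives $c_{v_i}=c_{v_{i+1}}$. In each case $c_{v_{i+1}}=c_{v_i}\neq 0$ and $\ell(v_{i+1})=M$, so $v_{i+1}\in D$, completing the induction. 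But then the finite set $D$ contains all $v_i$, contradicting $\ell(v_i)\to\infty$. Hence $z=0$, so $Z(\HHH)=R\cdot 1$.

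The crux is the second step: the centrality identities are genuinely deformed --- the parameters $a_s,b_s$ occur, and the mixed identity (C) relates three coefficients --- so one must restrict to the top of the length filtration, where coefficients of longer elements vanish, in order to extract both the clean propagation $c_{v_{i+1}}=c_{v_i}$ and the fact that elements of $D$ cannot be conjugated upward by a single generator. The only configuration that does not already simplify at the top --- conjugating by a generator which is a descent on both sides --- is exactly where invertibility of the $b_s$ is used, via (B) one level further down; dropping that hypothesis would require either a longer cascade through the lower-degree relations or a sharpening of the combinatorial input that avoids such moves.
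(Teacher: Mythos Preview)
Your overall strategy matches the paper's: both use the structure theorem on conjugacy classes (Theorem~B) to produce a non-decreasing conjugation sequence from a maximal-length support element, and both read off the contradiction from the centrality relations $T_sz=zT_s$ at the top of the length filtration. The identities you call (A), (B), (C) are precisely the content of the paper's Lemma~\ref{lem:dual-action}.

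The gap is the hypothesis you smuggle in: you assume the $b_s$ are units in order to deduce $c_{sv}=c_{vs}$ from (B), and you invoke this in the case where $s_i$ is a two-sided \emph{descent} of $v_i$. The theorem as stated imposes no such invertibility condition on $b_s$, so your argument as written only proves the weaker result. Your closing remark (``dropping that hypothesis would require\dots'') shows you are aware of this, but the required ``sharpening of the combinatorial input'' is in fact immediate and is exactly what the paper uses: if $s$ is a descent of $v$ on both sides and $svs\neq v$, then $\ell(svs)=\ell(v)-2$. (Write $v=su$ reduced; since $\ell(vs)<\ell(v)$ the exchange condition applied to the reduced word $s\cdot u$ for $v=su\cdot 1$ and the right multiplication by $s$ forces either $us=su$, giving $svs=v$, or a deletion inside $u$, giving $\ell(svs)=\ell(u)-1=\ell(v)-2$.) Hence in your induction the ``both descents'' case with $\ell(v_{i+1})=\ell(v_i)$ can only occur when $v_{i+1}=v_i$, where propagation is trivial. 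This is precisely the paper's sentence ``By the exchange condition\dots we either have $\ell(w_{i-1}s_i)>\ell(w_{i-1})$ or $\ell(s_iw_{i-1})>\ell(w_{i-1})$''; with this observation, only your identity (A) is needed and (B) is never invoked, so the hypothesis on $b_s$ disappears.

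A minor wording issue: the sentence ``for $v$ of length $M+1$ with $\ell(sv)=M$ one has $\ell(vs)=M+2$'' is false as a general claim (both-sided descents exist). What you actually need --- and what is true --- is the contrapositive you state afterwards: if $w\in D$ had $\ell(sws)=\ell(w)+2$, then $v:=sw$ satisfies $\ell(sv)=M$ and $\ell(vs)=M+2$, so (C) kills $c_w$.
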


As an immediate corollary, we obtain the following result for the two most-studied classes of Iwahori-Hecke algebras.
\begin{corintro}\label{corintro:Hecke}
  Let $(W,S)$ be a Coxeter system of irreducible indefinite type.
  \begin{itemize}
  \item If $\mathbf{q} = (q_s)_{s \in S}$ is a deformation multiparameter of positive real numbers, then the centre of the Hecke algebra $\CC_{\mathbf{q}}[W] = \HHH(W, S, (q_s^{1/2} - q_s^{-1/2}, 1)_{s \in S})$ over $\CC$ is trivial.
  \item If $L: W \to \ZZ$ is a length function associated with $(W,S)$, then the centre of the Hecke algebra $\HHH(W, S, (v^{L(s)},1)_{s \in S})$ over $\ZZ[v, v^{-1}]$ is trivial.
  \end{itemize}
\end{corintro}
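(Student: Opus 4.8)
The plan is to recognise both Hecke algebras appearing in the statement as particular instances of the generic Hecke algebra $\HHH(W, S, (a_s, b_s)_{s \in S})$ of Theorem~\ref{thmintro:Hecke-generic}, so that the latter applies directly. For the first item I would take $R = \CC$ and, for each $s \in S$, put $a_s := q_s^{1/2} - q_s^{-1/2}$ and $b_s := 1$: since every $q_s$ is a positive real number, its square roots $q_s^{\pm 1/2}$ are unambiguously defined positive reals, so each $a_s$ lies in $\CC$ and $b_s = 1 \in \CC^\times$, while the compatibility across conjugacy classes of generators is inherited from that of the multiparameter $\mathbf q$; thus $(a_s, b_s)_{s \in S}$ is a genuine deformation parameter over $\CC$. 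By the definition recalled in the statement one has $\CC_{\mathbf q}[W] = \HHH(W, S, (a_s, b_s)_{s \in S})$, and Theorem~\ref{thmintro:Hecke-generic} gives that its centre equals $\CC$. For the second item I would take $R = \ZZ[v, v^{-1}]$ and $a_s := v^{L(s)}$, $b_s := 1$ for $s \in S$: the defining property of a length function $L \co W \to \ZZ$ forces $L(s) = L(t)$ whenever $s, t \in S$ are conjugate in $W$, which is exactly the compatibility required for $(a_s, b_s)_{s \in S}$ to be a deformation parameter, and again $b_s = 1$ is invertible in $\ZZ[v, v^{-1}]$. Hence $\HHH(W, S, (v^{L(s)}, 1)_{s \in S})$ is an instance of the generic Hecke algebra and Theorem~\ref{thmintro:Hecke-generic} applies.

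In both cases the hypothesis on $(W, S)$ --- irreducible of indefinite type --- is inherited verbatim from the statement, so nothing further is needed. The only point requiring attention is the purely formal check that the classical quadratic relations underlying these two algebras are recovered from the generic relation $T_s^2 = a_s T_s + b_s$ under the substitutions above; I do not anticipate any genuine obstacle here, the corollary being a direct transcription of Theorem~\ref{thmintro:Hecke-generic} into the two most widely used conventions for Iwahori--Hecke algebras.
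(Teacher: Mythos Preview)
Your proposal is correct and matches the paper's approach exactly: the paper treats Corollary~\ref{corintro:Hecke} as an immediate specialisation of Theorem~\ref{thmintro:Hecke-generic}, with Example~\ref{ex:iwahori-hecke-algebra} recording precisely the two identifications $(a_s,b_s)=(q_s^{1/2}-q_s^{-1/2},1)$ over $\CC$ and $(a_s,b_s)=(v^{L(s)},1)$ over $\ZZ[v,v^{-1}]$ that you spell out. The only superfluous remark is about invertibility of $b_s$, which the paper's definition of deformation parameter does not require.
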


We refer the reader to Section~\ref{sec:center} for preliminaries on Hecke algebras.

Efforts to understand possible decompositions of Hecke algebras of arbitrary indefinite type and their associated von Neumann algebraic completions were started after the turn of the millenium, motivated by \mbox{L$^2$-cohomology} of buildings \cite{dymara06,davisdymarajanusykiewiczokun07}. For the class of irreducible indefinite right-angled Coxeter systems this led in \cite{garncarek16, raumskalski20} to a description of Hecke von Neumann algebras as a direct sum of a simple von Neumann algebra with finitely many copies of $\CC$ intersecting the Hecke algebra trivially. In particular, this covers the first part of Corollary~\ref{corintro:Hecke} in the special case of right-angled Coxeter systems. We consider the present work as an indication that a similar decomposition should exist for arbitrary Hecke von Neumann algebras of indefinite type.

The reason that we can obtain Theorem~\ref{thmintro:Hecke-generic} in the given generality is that we show a structural result for conjugacy classes of the Coxeter groups under consideration.  Indeed, it is a consequence of the following Coxeter group-theoretic result of independent interest.

Given $w,w'\in W$ and $s\in S$, write $w\stackrel{s}{\leftarrow}w'$ if $w'=sws$ and $\ell_S(w)\leq\ell_S(w')$.

\begin{thmintro}\label{thmintro:Cox}
Let $(W,S)$ be an infinite irreducible Coxeter system, and let $w\in W\setminus\{1\}$. If $W$ is of affine type, we moreover assume that $w$ is not a translation. Then there is a sequence $$w=w_0\stackrel{s_1}{\leftarrow}w_1\stackrel{s_2}{\leftarrow}w_2\stackrel{s_3}{\leftarrow}\dots\quad\textrm{for some $s_i\in S$}$$ such that $\ell_S(w_n)\stackrel{n\to\infty}{\to}\infty$.
\end{thmintro}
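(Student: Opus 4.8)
The plan is to reduce the statement to a rigidity property of the $S$-length under conjugation by generators, and to establish that property using Coxeter combinatorics together with the geometry of the Davis complex.

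Call $w\in W$ \emph{peaked} if $\ell_S(sws)\le\ell_S(w)$ for all $s\in S$, and for $w\in W$ let $R(w)$ be the set of elements reachable from $w$ by a finite chain of relations $u\stackrel{t}{\leftarrow}u'$. Such chains are non-decreasing in $\ell_S$ (which changes by $0$ or $\pm2$ under an elementary conjugation), so either $R(w)$ contains an element of length $>\ell_S(w)$, or $R(w)$ is contained in the length sphere of radius $\ell_S(w)$, in which case every element of $R(w)$ — in particular $w$ itself — is peaked. I claim it suffices to prove: \emph{if $w\ne1$ and every element of $R(w)$ is peaked, then $W$ is affine and $w$ is a translation.} Indeed, when $w$ satisfies the hypotheses of the theorem the same is true of every conjugate of $w$, hence of every element of $R(w)$ (conjugates of $1$ are $1$, and conjugates of translations are translations); so the displayed statement rules out the second alternative above, $R(w)$ contains a strictly longer conjugate $w_1$, which again satisfies the hypotheses, and iterating and concatenating the corresponding chains produces a sequence with $\ell_S(w_n)\to\infty$.

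Next I would reduce to the case where $w$ is \emph{essential}, i.e.\ $\supp(w)=S$. Among the finitely many elements of $R(w)$, pick one of maximal support and rename it $w$. For $s\notin\supp(w)$ one computes $\ell_S(sws)=\ell_S(w)$, so $sws$ is again reachable; since $\supp(sws)\subseteq\supp(w)\cup\{s\}$, maximality forces $sws=v\in W_{\supp(w)}$, and hence $sw\alpha_s=vs\alpha_s=-v\alpha_s<0$. But if $s$ is adjacent to $\supp(w)$, then $w$ does not centralise $s$ (an element of $W_{\supp(w)}$ centralising $s\notin\supp(w)$ would have support orthogonal to $s$), so $w\alpha_s$ is a positive root $\ne\alpha_s$ and therefore $sw\alpha_s>0$ — a contradiction. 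Thus nothing outside $\supp(w)$ is adjacent to $\supp(w)$, and since $W$ is irreducible and $w\ne1$ this gives $\supp(w)=S$. As conjugates of translations are translations, it remains to treat essential peaked elements.

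For this, translate ``peaked'' into the geometric representation on $V=\bigoplus_{s\in S}\RR\alpha_s$: from the behaviour of the inversion set $N(sws)=\Phi^+\cap sws\Phi^-$ one gets $\ell_S(sws)>\ell_S(w)$ exactly when $w\alpha_s,w\inv\alpha_s\in\Phi^+$ and $w\alpha_s\ne\alpha_s$, so $w$ is peaked precisely when $S=D_L(w)\cup D_R(w)\cup C$, where $D_L(w),D_R(w)$ are the descent sets of $w$ (hence $W_{D_L(w)},W_{D_R(w)}$ are spherical) and $C=\{s\in S:w\alpha_s=\alpha_s\}$; moreover $w$ then centralises $W_C$. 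The main obstacle is to prove that for \emph{essential} $w$ this configuration can only occur in affine type, with $w$ a translation. The approach I would take is geometric: imposed along the whole cyclic-shift orbit of $w$, the peaked condition asserts that the displacement function $g\mapsto\ell_S(gwg\inv)$ has a plateau-type local maximum; transported to the Davis complex $X$ with its CAT(0) Moussong metric — on which displacement functions are convex and which is geodesically complete — this should force $w$ to act as a Clifford translation, and hence, by the splitting theorem and irreducibility of $W$, force $X$ to be a point, a line, or a Euclidean space, i.e.\ $W$ to be finite or affine; and in the affine case, where $W$ acts properly cocompactly on Euclidean space, a Clifford translation is necessarily a lattice translation. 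The delicate point is that $\ell_S$ is only quasi-isometric to the CAT(0) metric, so ``local maximum'' does not transfer for free; making this rigorous will likely require either a careful comparison with a genuinely convex ($\ell^2$- or $\ell^\infty$-type) metric on $X$, or a purely combinatorial argument built on the spherical residues $W_{D_L(w)},W_{D_R(w)}$, the set $C$, and the classification of spherical and affine Coxeter diagrams.
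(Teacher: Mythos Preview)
Your reduction to peaked elements matches the paper's framework of $w$-bad chambers, and the iteration is fine. (One wrinkle: maximality of $\supp(w)$ in $R(w)$ does not force $sws\in W_{\supp(w)}$, since $\supp(sws)$ could be incomparable to $\supp(w)$. But the conclusion holds for a simpler reason: for $s\notin\supp(w)$ adjacent to $\supp(w)$ your own root computation already gives $sw\alpha_s>0$, i.e.\ $\ell(sws)=\ell(w)+2$, which contradicts peakedness directly --- no need for the detour through $v$.) The substantive gap is the one you flag yourself: the passage from ``$\ell_S$-peaked along $R(w)$'' to ``Clifford translation on the Davis complex'' is not carried out, and the obstacle is real. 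The paper does \emph{not} attempt a uniform geometric argument; it splits into two cases according to whether some proper $I\subsetneq S$ is non-spherical.

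When such an $I$ exists, the proof is purely combinatorial and never touches the Davis complex: take $I$ minimal non-spherical, use essentiality to get $w\notin N_W(W_I)$, and analyse the double coset $W_IwW_I$ via its minimal-length representative $\overline{w}$ and the factorisation through $W_I^H\cdot\overline{w}\cdot W_I$ with $H=I\cap\overline{w}I\overline{w}^{-1}$ spherical; a max/min argument on the three factors, together with Lemma~\ref{lemma:sphericalcriterion}, produces a strictly longer $I$-conjugate (Lemma~\ref{lemma:keylemma}). This is the ``combinatorial argument built on spherical residues'' you gesture towards, but it is organised around a proper parabolic $W_I$, not around the descent sets of $w$. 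Only when all proper parabolics are finite does the paper invoke the Davis complex, and the decisive extra input --- unavailable in general --- is that $X$ is then geodesically complete (Lemma~\ref{lemma:extgeod}). The bridge between $\ell_S$ and the CAT(0) displacement that you are looking for is Proposition~\ref{prop:dvsdchvsdw}: a strict increase of $\dc(\,\cdot\,,w\,\cdot\,)$ across a wall forces $\dist_w$ to be strictly increasing on a short segment across that wall; combined with convexity of $\dist_w$ and geodesic extension, a single bad chamber then propagates to all of $\Ch(X)$ (Theorem~\ref{thm:nobadallbad}), whence all conjugates have equal length and Lemma~\ref{lemma:ICC} finishes. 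Your unified approach runs precisely into the failure of geodesic extendibility, which is why the case split is needed.
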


Theorem~\ref{thmintro:Cox} for $W$ of affine type is the main result of \cite{Ros16} (the paper \cite{Ros16} actually deals with the more general setting of Iwahori-Weyl groups), and our arguments provide an alternative shorter proof of that case. Another related paper is \cite{Badupward}, whose main result asserts that, if $w\in W$ satisfies $\ell_S(rwr)\leq \ell_S(w)$ for any reflection $r\in S^W$(which is much stronger than only requiring it for elements of $S$), then all conjugates of $w$ have the same length (and this can only occur if $W$ is of affine type and $w$ is a translation, see Lemma~\ref{lemma:ICC}).

Note that the presence of translations in the affine case is precisely the reason why the centre of the corresponding Hecke algebras is nontrivial, and why Theorem~\ref{thmintro:Cox} does not imply the conclusion of Theorem~\ref{thmintro:Hecke-generic} in that case.

The proof of Theorem~\ref{thmintro:Cox} is split in two cases: the case where $W$ has an infinite proper parabolic subgroup, which we explore in Section~\ref{section:CGWAIPPS}, and the complementary case (which includes the affine, as well as the so-called \emph{compact hyperbolic} types), which we explore in Section~\ref{section:CGWNIPPS}. The proof for the former case is more of combinatorial nature, using properties of double cosets of standard parabolic subgroups (as well as some properties of the Coxeter complex $\Sigma$ of $(W,S)$). The proof for the latter case is of geometric nature, using a CAT(0) metric realisation of $\Sigma$, called the \emph{Davis complex} of $(W,S)$. The necessary preliminaries for these proofs are exposed in Section~\ref{section:prelim}.


\section{Preliminaries on Coxeter groups and the Davis complex}\label{section:prelim}
The general reference for this section is \cite{BrownAbr} (see also \cite[\S 21]{MPW15} for \S\ref{subsection:prelim:res} and \cite{Moussong} for \S\ref{subsection:prelim:Davis}).


\subsection{Coxeter groups}
Let $(W,S)$ be a Coxeter system with finite generating set $S$. We denote by $\ell=\ell_S\co W\to \NN$ the word length with respect to $S$.

The subgroups $W_I:=\langle I\rangle\subseteq W$ ($I\subseteq S$) are called {\bf standard parabolic subgroups} of $W$, and their conjugates {\bf parabolic subgroups} of $W$. 

A subset $I\subseteq S$ (resp. $W_I$) is {\bf irreducible} if it does not decompose nontrivially as a disjoint union $I=I_1\sqcup I_2$ with $s_1s_2=s_2s_1$ for all $s_1\in I_1$ and $s_2\in I_2$. We call $I$ {\bf spherical} if $W_I$ is finite. We call $I$ of {\bf finite type} (resp. {\bf affine type}) if it is irreducible and $W_I$ is finite (resp. virtually abelian).
  

\subsection{Coxeter complexes}\label{subsection:prelim:CC}
The {\bf Coxeter complex} $\Sigma=\Sigma(W,S)$ of $(W,S)$ is the simplicial complex with simplices the cosets $wW_I$ ($w\in W$, $I\subseteq S$) and face relation $\leq$ the opposite of the inclusion relation. In particular, the maximal simplices, called {\bf chambers}, correspond to cosets of the form $wW_{\varnothing}=\{w\}$, that is, to elements of $W$. The chamber $\{1_W\}$ is called the {\bf fundamental chamber}, denoted $C_0$; the map $W\to\Ch(\Sigma):w\mapsto wC_0$ from $W$ to the set $\Ch(\Sigma)$ of chambers is then a bijection.

Two chambers are {\bf adjacent} if they are of the form $\{w\}$ and $\{ws\}$ for some $w\in W$ and $s\in S$ (in this case, they are called {\bf $s$-adjacent}). A {\bf gallery} in $\Sigma$ of {\bf length} $k\in\NN$ is a sequence $\Gamma=(D_0,D_1,\dots,D_k)$ such that $D_{i-1}$ is adjacent to $D_i$ (say $s_i$-adjacent) for each $i=1,\dots,k$. If $(s_1,\dots,s_k)\in J^k$ for some $J\subseteq S$, then $\Gamma$ is called a {\bf $J$-gallery}. The {\bf chamber distance} $\dc(C,D)$ between the chambers $C$ and $D$ is the length of a minimal-length gallery from $C$ to $D$. Alternatively, $\dc(vC_0,wC_0)=\ell(v\inv w)$ is the distance between $v$ and $w$ in the Cayley graph $\mathrm{Cay}(W,S)$ of $(W,S)$. A refinement of $\dc$ is the {\bf Weyl distance} $\delta\co \Ch(\Sigma)\times \Ch(\Sigma)\to W$ given by $\delta(vC_0,wC_0)=v\inv w$.

The group $W$ acts (by left translation) by simplicial isometries on $\Sigma$. The elements of $S$ are called {\bf simple reflections} and their conjugates {\bf reflections}. Each reflection $r\in W$ is uniquely determined by its fixed-point set $m$ in $\Sigma$ (called a {\bf wall}), and we then write $r=r_m$. Each wall $m$ determines two subcomplexes of $\Sigma$, called {\bf half-spaces}: if we again denote by $m$ the fixed-point set of $r_m$ in $\mathrm{Cay}(W,S)$, then $\mathrm{Cay}(W,S)\setminus m$ has two connected components, whose vertex sets are (under the identification $W\approx\Ch(\Sigma)$) the underlying chamber sets of these half-spaces. Two chambers $C,D$ are {\bf separated} by a wall $m$ if they lie in different half-spaces associated to $m$. Two adjacent chambers are separated by a unique wall: $wC_0$ and $wsC_0$ are separated by the wall $m$ with $r_m=wsw\inv$. The $|S|$ walls corresponding to $wsw\inv$, $s\in S$, are called the {\bf walls of $wC_0$}. The number of walls separating two chambers $C,D$ coincides with $\dc(C,D)$.


\subsection{Residues}\label{subsection:prelim:res}
Given a chamber $C\in\Ch(\Sigma)$ and a subset $J\subseteq S$, the set $R_J(C)$ of chambers connected to $C$ by a $J$-gallery is called a {\bf $J$-residue} (or {\bf residue of type} $J$, or just {\bf residue}). If $C=C_0$, we simply write $R_J:=R_J(C_0)$ and call it the {\bf standard $J$-residue}. Under the identification $W\approx\Ch(\Sigma)$, we have $R_J(wC_0)=wR_J\approx wW_J$; in particular, the stabilisers of residues are precisely the parabolic subgroups of $W$. A $J$-residue is {\bf spherical} if $J$ is spherical. A wall $m$ is called a {\bf wall of the residue $R$} if it separates two chambers of $R$. Alternatively, $m$ is a wall of $R$ if and only if $r_m\in\Stab_W(R):=\{w\in W \ | \ wR=R\}$.

Given a chamber $C$ and a residue $R$, there is a unique chamber of $R$ minimising the chamber distance from $C$ to chambers of $R$; it is denoted $\proj_R(C)$ and called the {\bf projection} of $C$ on $R$. It enjoys the following {\bf gate property}:
$$\dc(C,D)=\dc(C,\proj_R(C))+\dc(\proj_R(C),D) \quad\textrm{for all $D\in R$.}$$
Alternatively, $\proj_R(C)$ is the unique chamber $D$ of $R$ such that $C,D$ are not separated by any wall of $R$. The map $\proj_R\co\Ch(\Sigma)\to R$ does not increase the chamber distance, i.e. $$\dc(\proj_R(C),\proj_R(D))\leq\dc(C,D)\quad\textrm{for all $C,D\in\Ch(\Sigma)$.}$$
Note that $\proj_R$ is compatible with the $W$-action, in the sense that 
$$w\proj_R(C)=\proj_{wR}(wC)\quad\textrm{for all $w\in W$ and $C\in\Ch(\Sigma)$.}$$

Given two residues $R,R'$, the set $\proj_R(R')=\{\proj_R(C) \ | \ C\in R'\}$ is again a residue, called the {\bf projection} of $R$ on $R'$. If the restriction of $\proj_R$ to $R'$ is bijective, the residues $R,R'$ are called {\bf parallel} (and the inverse bijection is then given by the restriction of $\proj_{R'}$ to $R$). Alternatively, $R,R'$ are parallel if and only if they have the same set of walls if and only if $\Stab_W(R)=\Stab_W(R')$. If $R,R'$ are parallel, then the element $\delta(C,\proj_{R'}(C))\in W$ is independent of $C\in R$ and called the {\bf Weyl distance} from $R$ to $R'$, denoted $\delta(R,R')$ (note that $\delta(R',R)=\delta(R,R')\inv$).

We record the following result on parallel residues for future reference.

\begin{lemma}\label{lemma:MPWcriterionspherical}
Let $R,\overline{R}$ be parallel spherical residues of $\Sigma$, and let $x:=\delta(R,\overline{R})$. Suppose that $R$ is of type $J:=\{s\in S \ | \ \ell(sx)=\ell(x)+1\}$. Then $S$ is spherical.
\end{lemma}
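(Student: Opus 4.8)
The plan is to extract from the hypotheses a single element $z\in W$ whose left descent set is all of $S$; this forces $W$ to be finite. First I would normalise: since the $W$-action on $\Sigma$ preserves Weyl distances between residues, parallelism and types, I may replace $R,\overline R$ by a common translate and assume $C_0\in R$, so that $R=R_J$ with $W_J$ finite. Writing $K$ for the type of $\overline R$ (so $W_K$ is finite, with longest element $w_K$) and $\overline C:=\proj_{\overline R}(C_0)$, one has $\delta(C_0,\overline C)=\delta(R,\overline R)=x$, hence $\overline C=xC_0$ and $\overline R=xR_K$. Two observations then follow. Parallelism of $R$ and $\overline R$ gives $W_J=\Stab_W(R)=\Stab_W(\overline R)=xW_Kx\inv$. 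And the gate property of $\overline C=\proj_{\overline R}(C_0)$, applied to the chambers $xsC_0\in\overline R$ ($s\in K$), gives $\ell(xs)=\dc(C_0,xsC_0)=\dc(C_0,\overline C)+\dc(\overline C,xsC_0)=\ell(x)+1$, so that $x\in W^K$, i.e.\ $\ell(xv)=\ell(x)+\ell(v)$ for all $v\in W_K$. Finally, the assumption that $R$ has type $J=\{s\in S\mid\ell(sx)=\ell(x)+1\}$ says precisely that $\ell(sx)=\ell(x)-1$ for every $s\in S\setminus J$, i.e.\ that $S\setminus J$ is the left descent set of $x$.

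The heart of the argument is then to set $z:=xw_K$ and to check that every $s\in S$ is a left descent of $z$. Since $x\in W^K$, one has $\ell(z)=\ell(x)+\ell(w_K)$. If $s\in S\setminus J$, then $\ell(sx)=\ell(x)-1$, whence $\ell(sz)=\ell(sx\,w_K)\le\ell(sx)+\ell(w_K)=\ell(z)-1<\ell(z)$. If $s\in J$, then $s\in W_J=xW_Kx\inv$, so $t:=x\inv sx$ is a nontrivial element of $W_K$; writing $sz=x(tw_K)$ with $tw_K\in W_K$ and using $x\in W^K$ together with the fact that $w_K$ is the longest element of $W_K$, one gets $\ell(sz)=\ell(x)+\ell(tw_K)=\ell(x)+\ell(w_K)-\ell(t)<\ell(z)$. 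Hence $\ell(sz)<\ell(z)$ for all $s\in S$.

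To conclude I would invoke that a Coxeter group admitting an element with full left descent set is finite: in the root system of $(W,S)$, $\ell(sz)<\ell(z)$ for all $s\in S$ means that $z\inv$ sends every simple root, hence (every positive root being a non-negative combination of simple roots) every positive root, to a negative root, so the set of positive roots is finite of cardinality $\ell(z)$; therefore $W$ is finite, i.e.\ $S$ is spherical. I expect the only genuinely delicate part to be the first paragraph — reading off $W_J=xW_Kx\inv$ and $x\in W^K$ correctly from parallelism and the gate property, and recognising that the hypothesis on the type of $R$ is exactly the input that lets one handle the simple reflections outside $J$; everything after that is a few lines of length arithmetic. (One could phrase the last step via the standard fact that the left descent set of any element generates a finite parabolic subgroup, but the argument above shows this is not actually needed here.)
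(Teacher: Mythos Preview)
Your argument is correct. The paper itself does not give an independent proof but simply invokes \cite[Prop.~21.30]{MPW15}, so your approach is genuinely different in that it is self-contained. Your normalisation and the extraction of $W_J=xW_Kx\inv$ and $x\in W^K$ from parallelism and the gate property are exactly right, and the key trick --- setting $z:=xw_K$ and splitting into $s\in S\setminus J$ (handled by $\ell(sx)<\ell(x)$) versus $s\in J$ (handled by $s\in xW_Kx\inv$ together with $x\in W^K$ and the longest-element identity $\ell(tw_K)=\ell(w_K)-\ell(t)$) --- goes through cleanly. The conclusion via the root system (or, equivalently, the standard fact that an element with full left descent set forces finiteness) is routine. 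The upshot: the paper buys brevity by citing an external building-theoretic result, while your route gives an elementary few-line argument internal to Coxeter combinatorics; the latter is arguably preferable here since no machinery beyond length arithmetic and the existence of $w_K$ is needed.
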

\begin{proof}
This follows from \cite[Prop.~21.30]{MPW15} applied with $w:=x\inv$, $K:=J$, $T:=R$ and $R:=\overline{R}$.
\end{proof}


\subsection{Cosets of standard parabolic subgroups}
For $I,J\subseteq S$, set
$$\prescript{I}{}{W}=\{w\in W \ | \ \ell(sw)>\ell(w) \ \forall s\in I\}$$ 
and
$$W^J=\{w\in W \ | \ \ell(ws)>\ell(w) \ \forall s\in J\}.$$
Set also $\prescript{I}{}{W}^J:=\prescript{I}{}{W}\cap W^J$. Every coset $wW_I$ (resp. $W_Iw$) contains a unique element $w_0$ of minimal length, which is the unique element $w_0\in wW_I\cap W^I$ (resp. $w_0\in W_Iw\cap  \prescript{I}{}{W}$), and we have $\ell(w_0w_I)=\ell(w_0)+\ell(w_I)$ (resp. $\ell(w_Iw_0)=\ell(w_I)+\ell(w_0)$) for all $w_I\in W_I$. Geometrically, an element $w\in W$ belongs to $W^I$ if and only if $\proj_{R_I(wC_0)}(C_0)=wC_0$, so that the last equalities are reformulations of the gate property.

Similarly, every double coset $W_IwW_J$ contains a unique element of minimal length, or equivalently, a unique element in $\prescript{I}{}{W}^J$. The following lemma is well-known (we could not find its proof in the literature, so we include it here for the benefit of the reader).

\begin{lemma}\label{lemma:doubleparabdecomp}
Let $I,J\subseteq S$ and $w\in \prescript{I}{}{W}^J$. Set $H:=I\cap wJw\inv$. Then for any $x\in W_I$, we have $xw\in W^J$ if and only if $x\in W_I^H$. In particular, every element of $W_IwW_J$ can be uniquely written as $uwv$ with $u\in W_I^H$ and $v\in W_J$, and in that case $\ell(uwv)=\ell(u)+\ell(w)+\ell(v)$.
\end{lemma}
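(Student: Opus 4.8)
The plan is to deduce the whole lemma from the single auxiliary fact that
\[
  W_I\cap wW_Jw\inv=W_H\qquad\text{for }w\in\prescript{I}{}{W}^J,\ H=I\cap wJw\inv ,
\]
a Kilmoyer-type statement about intersections of parabolics, which I expect to be the crux of the argument; granting it, the equivalence ``$xw\in W^J\iff x\in W_I^H$'' and the asserted normal form fall out of short length computations. Throughout I will use freely that $w\in\prescript{I}{}{W}^J$ means precisely that $\ell(xw)=\ell(x)+\ell(w)$ for all $x\in W_I$ and $\ell(wv)=\ell(w)+\ell(v)$ for all $v\in W_J$ (reformulations of the gate property), together with the strong exchange condition.

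First I would prove the auxiliary fact. The inclusion $W_H\subseteq W_I\cap wW_Jw\inv$ is clear, since each $t\in H$ lies in $I\subseteq W_I$ and equals $wsw\inv$ for some $s\in J$. For the reverse inclusion I induct on $\ell(g)$ for $g\in W_I\cap wW_Jw\inv$: if $g\neq1$, pick $r\in I$ with $\ell(rg)<\ell(g)$ and set $g':=w\inv gw\in W_J$. Comparing $\ell(gw)=\ell(g)+\ell(w)$ with $gw=wg'$ and $\ell(wg')=\ell(w)+\ell(g')$ gives $\ell(g')=\ell(g)$; comparing $\ell(rgw)=\ell(rg)+\ell(w)=\ell(g)+\ell(w)-1$ with $rgw=rwg'$ then forces $rw\notin W^J$ (otherwise $\ell(rwg')=\ell(rw)+\ell(g')=\ell(g)+\ell(w)+1$). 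Hence $\ell(rws)\leq\ell(w)$ for some $s\in J$, and applying strong exchange to the reduced word $rs_1\cdots s_q$ of $rw$ (reduced, as $\ell(rw)=1+\ell(w)$, where $w=s_1\cdots s_q$ is reduced), the deleted letter cannot be any $s_i$ since that would give $\ell(ws)<\ell(w)$, contradicting $w\in W^J$; so the leading $r$ is deleted, $rws=w$, and $r=wsw\inv\in H$. Then $rg$ again lies in $W_I\cap wW_Jw\inv$ and has smaller length, so $rg\in W_H$ by induction and $g=r(rg)\in W_H$. This length bookkeeping together with the correct reading of strong exchange is the delicate point of the whole proof.

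Next, the equivalence. If $x\notin W_I^H$, pick $t=wsw\inv\in H$ (with $s\in J$) such that $\ell(xt)<\ell(x)$; since $xt\in W_I$ and $w\in\prescript{I}{}{W}$, one has $\ell(xws)=\ell((xt)w)=\ell(xt)+\ell(w)<\ell(xw)$, so $xw\notin W^J$. Conversely, if $\ell(xws)<\ell(xw)$ for some $s\in J$, then as $\ell(xw)=\ell(x)+\ell(w)$ I apply strong exchange to a reduced word $a_1\cdots a_p s_1\cdots s_q$ of $xw$ (with $x=a_1\cdots a_p$, $w=s_1\cdots s_q$ reduced): the deleted letter cannot be an $s_i$ (else $\ell(ws)<\ell(w)$), so it is some $a_i$, giving $y\in W_I$ with $\ell(y)<\ell(x)$ and $yw=xws$, whence $x\inv y=wsw\inv\in W_I\cap wW_Jw\inv=W_H$ by the auxiliary fact. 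Since $\ell(x(x\inv y))=\ell(y)<\ell(x)$ with $x\inv y\in W_H$, the element $x$ is not of minimal length in $xW_H$, i.e.\ $x\notin W_I^H$.

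Finally, the normal form. Any element of $W_IwW_J$ is $xwv$ with $x\in W_I$, $v\in W_J$; writing $x=uu'$ with $u\in W_I^H$, $u'\in W_H$ and $\ell(x)=\ell(u)+\ell(u')$, and noting $u'\in W_H\subseteq wW_Jw\inv$, the element $v':=(w\inv u'w)v$ lies in $W_J$ and $xwv=uwv'$, which gives existence. The length identity then follows because $u\in W_I^H$ yields $uw\in W^J$ by the equivalence and $\ell(uw)=\ell(u)+\ell(w)$, so $\ell(uwv')=\ell(uw)+\ell(v')=\ell(u)+\ell(w)+\ell(v')$. For uniqueness, $uwv=u'wv'$ with $u,u'\in W_I^H$, $v,v'\in W_J$ gives $u\inv u'=wv(v')\inv w\inv\in W_I\cap wW_Jw\inv=W_H$, so $u$ and $u'$ lie in a common coset of $W_H$ and, both being of minimal length there, coincide; then $v=v'$.
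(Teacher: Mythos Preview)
Your argument is correct. The organisation, however, differs from the paper's.

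You make Kilmoyer's identity $W_I\cap wW_Jw\inv=W_H$ the centrepiece, prove it first via the exchange condition, and then derive both the equivalence and the uniqueness from it through coset-minimality arguments. The paper never isolates this statement. For the implication $x\in W_I^H\Rightarrow xw\in W^J$ it uses the same exchange step you do (quoted there as \cite[Lemma~2.24]{BrownAbr}) to obtain $xwt=x_1w$ with $x_1\in W_I$, but then notes directly that $\ell(wtw\inv)=\ell(wt)-\ell(w)=1$ (because $wtw\inv\in W_I$ and $w\in\prescript{I}{}{W}$), so $wtw\inv$ lies in $I\cap wJw\inv=H$ on the nose; this gives $\ell(x\cdot wtw\inv)<\ell(x)$ and the contradiction immediately, without ever needing the full parabolic intersection. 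For uniqueness the paper uses pure length arithmetic: from $u_1wv_1=u_2wv_2$ and the length formula one gets $\ell(u_2)=\ell(u_1)+\ell(v_1v_2\inv)$ and symmetrically $\ell(u_1)=\ell(u_2)+\ell(v_1v_2\inv)$, forcing $v_1=v_2$. Your route is self-contained and yields the stronger Kilmoyer statement as a by-product; the paper's is shorter, replacing your inductive proof of that statement with a one-line length computation and handling uniqueness without any appeal to $W_I\cap wW_Jw\inv$.
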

\begin{proof}
Let $x\in W_I$. If $s\in H$, then $xsw=xwt_s$ with $t_s:=w\inv sw\in J$, and $\ell(xsw)=\ell(xs)+\ell(w)$ as $w\in \prescript{I}{}{W}$. Hence if $xw\in W^J$, then $$\ell(x)+\ell(w)+1=\ell(xw)+1=\ell(xwt_s)=\ell(xsw)=\ell(xs)+\ell(w)\quad\textrm{for all $s\in H$},$$
so that $x\in W_I^H$. 

Conversely, let $x\in W_I^H$ and suppose for a contradiction that $\ell(xwt)<\ell(xw)$ for some $t\in J$. By \cite[Lemma~2.24]{BrownAbr}, we then have $xwt=x_1w$ for some $x_1\in W_I$, so that $wtw\inv\in W_I$. But then $\ell(w)+1=\ell(wt)=\ell(wtw\inv)+\ell(w)$, so that $\ell(wtw\inv)=1$ and hence $wtw\inv\in I$. Therefore, $s:=wtw\inv\in H$ and 
$$\ell(xs)=\ell(xsw)-\ell(w)=\ell(xwt)-\ell(w)<\ell(xw)-\ell(w)=\ell(x),$$
a contradiction.

For the uniqueness statement in the second part of the lemma, suppose that $u_1wv_1=u_2wv_2$ with $u_i\in W_I^H$ and $v_i\in W_J$. Then $$\ell(u_2)=\ell(u_2w)-\ell(w)=\ell(u_1wv_1v_2\inv)-\ell(w)=\ell(u_1)+\ell(v_1v_2\inv),$$ and similarly $\ell(u_1)=\ell(u_2)+\ell(v_2v_1\inv)=\ell(u_2)+\ell(v_1v_2\inv)$. Thus, $\ell(v_1v_2\inv)=0$, that is, $v_1=v_2$ and hence $u_1=u_2$.
\end{proof}


\subsection{Davis complex}\label{subsection:prelim:Davis}
The {\bf Davis complex} $X$ of $(W,S)$ is a metric realisation of $\Sigma$, turning $\Sigma$ into a complete CAT(0) cellular complex with piecewise Euclidean metric $\dist$ (in particular, it is an \emph{$M_0$-polyhedral complex of curvature $\leq 0$} in the sense of \cite{BHCAT0}), on which $W$ naturally acts by cellular isometries. It is constructed as follows.

If $I$ is a spherical subset of $S$, then $W_I$ acts as a linear reflection group on some $|I|$-dimensional Euclidean space $V_I$, with the simple reflections $s\in I$ acting as linear reflections. The walls $m_s$ fixed by $s\in I$ delimit a simplicial cone $C_I$ in $V_I$, and we let $x_I$ be the point in the interior of $C_I$ at distance $1$ from each of these walls. The convex hull of the orbit $W_Ix_I$ is then a convex polytope $P_I$, which we equip with the induced Euclidean metric from $V_I$ (the $1$-skeleton of this polytope is the Cayley graph of $(W_I,I)$).   

The construction of the cellular complex $X$ is now as follows. The $1$-skeleton $X^{(1)}$ of $X$ is the Cayley graph of $(W,S)$. Now, each spherical residue of $\Sigma$ (say of type $I$) corresponds to a subgraph of $X^{(1)}$ isomorphic to $\mathrm{Cay}(W_I,I)$ (recall that chambers of $\Sigma$ correspond to elements of $W$), and we glue to this subgraph the $|I|$-dimensional {\bf Coxeter cell} $P_I$ (more precisely, we perform this gluing process step by step, going from lower to top dimensional cells, gluing each time along lower dimensional cells, so that if $I\subseteq J$ are spherical, then $P_I$ will be a face of the polytope $P_J$). The metric $\dist$ on $X$ is then obtained by gluing together the Euclidean metrics on each of the cells $P_I$ ($I\subseteq S$ spherical).

Since $W$ acts on $\mathrm{Cay}(W,S)$ and preserves spherical residues, the $W$-action on $\Sigma$ indeed induces a $W$-action on $X$ by cellular isometries.

\begin{example}
If $(W,S)$ is infinite irreducible and all proper parabolic subgroups of $W$ are finite (i.e. all proper subsets of $S$ are spherical), then $X$ is just the standard geometric realisation of $\Sigma$ (minus the empty simplex): it is either a tessellation of the Euclidean space by congruent simplices (if $W$ is of affine type --- see Figure~\ref{figure:A2tilde} for an example), or else a tessellation of the hyperbolic space by congruent (compact) simplices (in that case, $W$ is said to be of {\bf compact hyperbolic} type --- see Figure~\ref{figure:hyperbolic} for an example). In the first case, $\dist$ is the usual Euclidean metric, whereas in the latter case, $\dist$ is quasi-isometric to the usual hyperbolic metric (which is of course not locally Euclidean). In both examples, the maximal Coxeter cells are the convex regular $2n$-gons (with $n=3$ on Figure~\ref{figure:A2tilde} and $n=4$ on Figure~\ref{figure:hyperbolic}) whose vertices are the barycentres of the triangles containing a given vertex of the tesselation.
\end{example}

\begin{figure}
\centering
\begin{minipage}[t]{.5\textwidth}
   \centering
  \includegraphics[trim = 13mm 5mm 21mm 4mm, clip, width= 7.5cm]{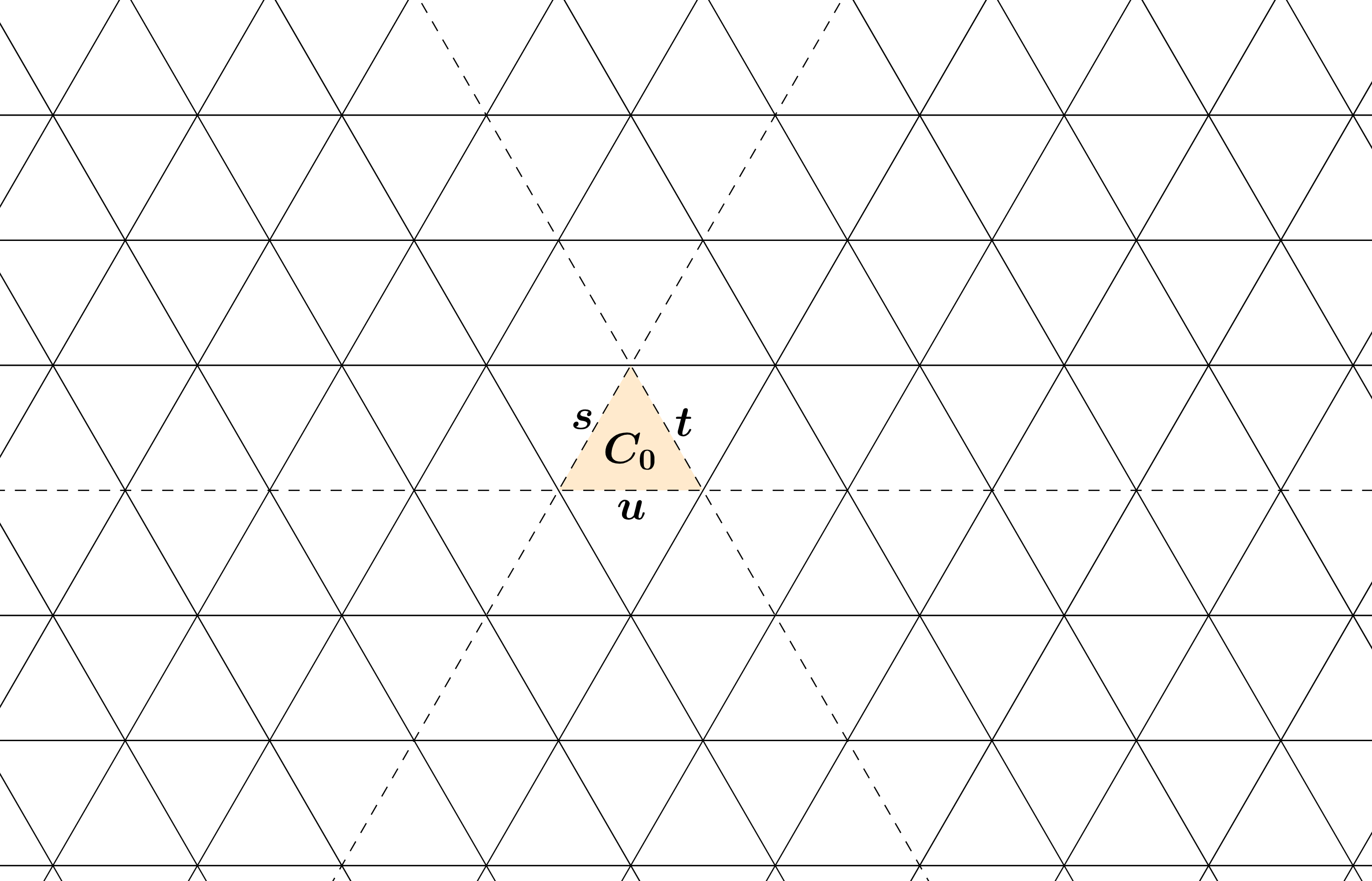}
  \captionof{figure}{$W=\langle s,t,u\rangle$ of affine type $(3,3,3)$.}
  \label{figure:A2tilde}
\end{minipage}%
\begin{minipage}[t]{.5\textwidth}
  \centering
  \includegraphics[trim = 43mm 14mm 17mm 14mm, clip, width=6.5cm]{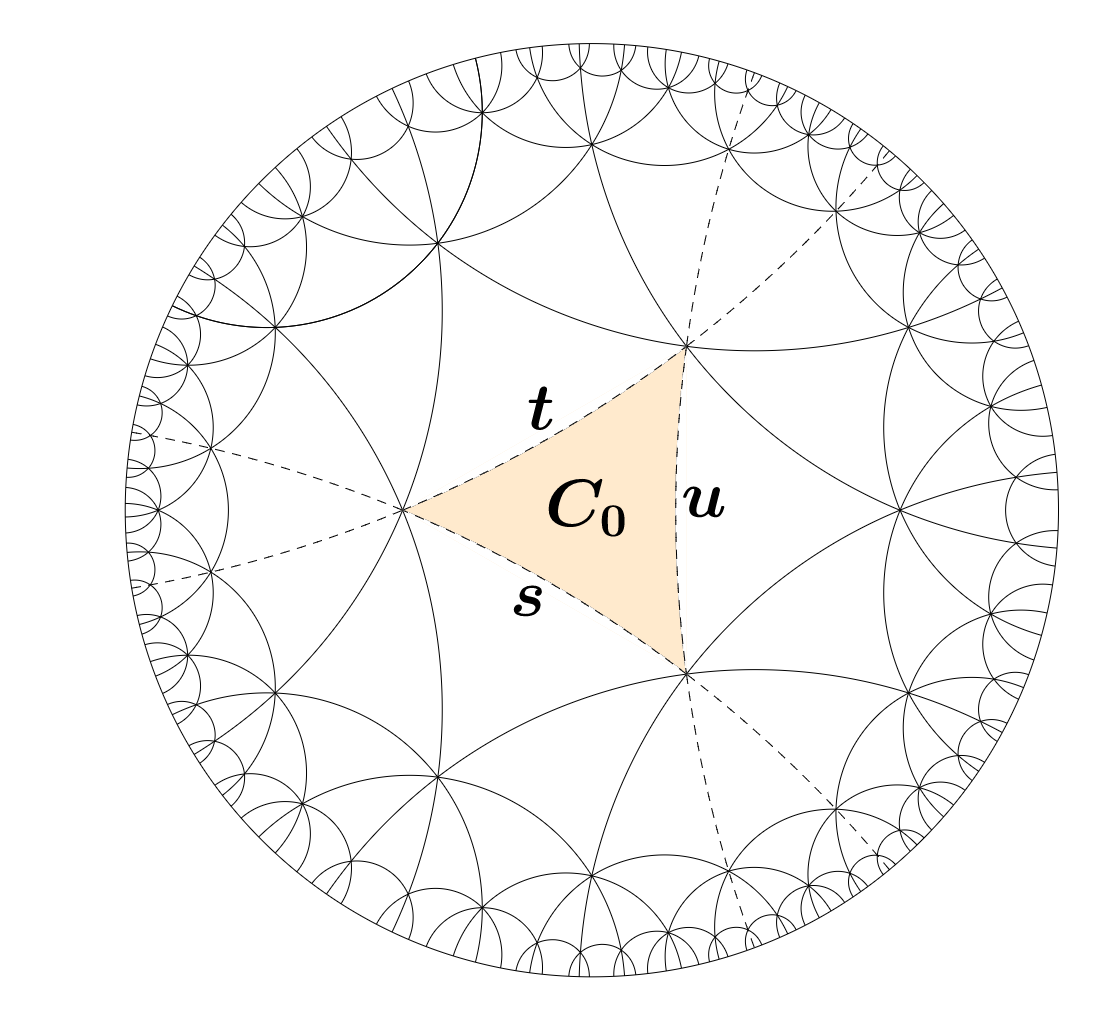}
  \captionof{figure}{$W=\langle s,t,u\rangle$ of compact hyperbolic type $(4,4,4)$.}
  \label{figure:hyperbolic}
\end{minipage}
\end{figure}

Each chamber $C$ of $\Sigma$ is realised in $X$ as a (closed) convex subset: if $C=wC_0$, so that $x_C:=\{w\}$ is a vertex in $X^{(1)}$, then each spherical residue $wR_I$ ($I\subseteq S$ spherical) containing $C$ yields a polytope $P_I$ in $X$ for which $x_I=x_C$, and $C$ is then realised as the union over all such polytopes of the intersections $P_I\cap C_I$. We call the vertex $x_C$ the {\bf barycentre} of $C$. We identify the chambers, walls and half-spaces of $\Sigma$ with their realisation in $X$. In particular, we will also write $\Ch(X)$ instead of $\Ch(\Sigma)$.

Alternatively, each wall $m$ of $\Sigma$ is realised as the fixed-point set of $r_m$ in $X$, and delimits two connected components of $X$, whose closures are the realisations of the corresponding half-spaces. The chamber $C$ of $\Sigma$ is the intersection of half-spaces associated to each of its walls, and its realisation in $X$ is then the same intersection of (closed) half-spaces, viewed as subsets of $X$.

The walls and half-spaces have strong convexity properties:

\begin{lemma}\label{lemma:convexity_walls}
Let $r\subseteq X$ be a geodesic segment and $m$ a wall. 
\begin{enumerate}
\item
If $r$ intersects $m$ in at least two points, then $r\subseteq m$.
\item
The open half-spaces delimited by $m$ are convex. In particular, if $r$ intersects $m$ and is contained in one of the two closed half-spaces delimited by $m$, then $r\subseteq m$.
\end{enumerate}
\end{lemma}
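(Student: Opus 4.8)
The plan is to exploit the description of walls in the Davis complex $X$ as fixed-point sets of reflections together with the CAT(0) geometry. The key underlying facts are: (i) $X$ is a complete CAT(0) space, so geodesic segments between any two points are unique; and (ii) for each wall $m$ the reflection $r_m$ acts on $X$ as a cellular isometry whose fixed-point set is exactly the realisation of $m$ (this is how walls in $X$ were defined in \S\ref{subsection:prelim:Davis}), and $r_m$ swaps the two open half-spaces delimited by $m$. In particular $\mathrm{Fix}(r_m) = m$ is a closed convex subset of $X$: it is convex because the fixed-point set of an isometry of a CAT(0) space is convex (if $x, y$ are fixed, then $r_m$ maps the unique geodesic $[x,y]$ to a geodesic from $x$ to $y$, which by uniqueness must be $[x,y]$ itself, and an isometry of a geodesic fixing its endpoints fixes it pointwise).

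For part (1), suppose the geodesic segment $r = [p,q]$ meets $m$ in two distinct points $x \ne y$. Since $m$ is convex (by the observation above) and $x, y \in m$, the geodesic $[x,y] \subseteq m$. But $[x,y]$ is a subsegment of the geodesic $r$ (any subsegment of a geodesic between two of its points is the geodesic between those points, by uniqueness of geodesics in a CAT(0) space). Now extend: I claim all of $r$ lies in $m$. Apply $r_m$ to $r$: since $r_m$ is an isometry fixing the two distinct points $x, y$ of the geodesic, and fixing $x$ and $y$ forces $r_m$ to fix the whole line through them — more carefully, $r_m(r) = [r_m(p), r_m(q)]$ is a geodesic segment containing the two points $x,y$, and a geodesic segment containing $x$ and $y$ must contain $[x,y]$; comparing $r$ and $r_m(r)$, both are geodesics through $x$ and $y$, hence they agree on $[x,y]$ and, by local uniqueness of geodesic extensions in a complete CAT(0) space (a geodesic is determined by any nondegenerate subsegment), $r = r_m(r)$ as sets, so $r$ is $r_m$-invariant. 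An $r_m$-invariant geodesic segment meeting $\mathrm{Fix}(r_m)$ in an interior segment must lie entirely in $\mathrm{Fix}(r_m)$: if some point $z \in r$ had $r_m(z) \ne z$, then $z$ and $r_m(z)$ lie in opposite open half-spaces, but they are both on the $r_m$-invariant geodesic $r$ which also contains the segment $[x,y] \subseteq m$, and the geodesic from $z$ to $r_m(z)$ would have to cross $m$ transversally while already sharing the subsegment $[x,y]$ with $m$ — contradicting uniqueness of geodesics (the geodesic $[z, r_m(z)]$ would meet $m$ in more than one point and also exit it). Hence $r \subseteq m$.

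For part (2), convexity of the open half-spaces: let $H$ be one of the two open half-spaces delimited by $m$ and let $p, q \in H$. The geodesic $[p,q]$ is unique; I must show it avoids $m$. If it met $m$ in a single point it might still leave $H$, so suppose $[p,q] \cap m \ne \varnothing$. If $[p,q]$ met $m$ in at least two points, then by part (1) we would get $[p,q] \subseteq m$, contradicting $p \in H$; so $[p,q]$ meets $m$ in exactly one point $z$. Then $[p,z]$ and $[z,q]$ are geodesics from $H \cup m$ to $m$; I apply $r_m$ to the subsegment $[p,z]$: $r_m([p,z]) = [r_m(p), z]$ is a geodesic from the opposite open half-space $r_m(H)$ to $z$. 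Concatenating $[p,z]$ with $r_m([z,q])$ gives a path from $p$ to $r_m(q)$ through $z$ of length $\dist(p,z) + \dist(z, r_m(q)) = \dist(p,z) + \dist(z,q) = \dist(p,q)$, which is therefore a geodesic; but $p, r_m(q)$ lie in opposite open half-spaces so any path between them crosses $m$, and by part (1) the only way a geodesic meets $m$ without being contained in it is in a single point, consistent so far. The contradiction instead comes from the second assertion of part (2) combined with part (1): if $r$ is contained in one closed half-space and meets $m$, then near the contact point it lies on one side; using that $X$ is CAT(0) and $m$ is the fixed set of the reflection $r_m$, the "reflection trick" of replacing the portion of $r$ on the far side of $z$ by its $r_m$-image produces another geodesic of the same length with the same endpoints only if $r$ was already in $m$ — so a geodesic tangent to $m$ from inside a closed half-space must lie in $m$. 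Applying this to $[p,z]$ (contained in $\overline{H}$, meeting $m$ at $z$) forces $[p,z] \subseteq m$, whence $p \in m$, contradicting $p \in H$. Therefore $[p,q] \cap m = \varnothing$ and $H$ is convex; the "in particular" clause is exactly the tangency statement just argued.

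The main obstacle is making the reflection-trick arguments fully rigorous in the generality of an $M_0$-polyhedral complex of curvature $\le 0$ rather than a smooth manifold: one cannot speak of transversality or angles naively, and must instead phrase everything in terms of uniqueness of geodesics in complete CAT(0) spaces (Bridson–Haefliger) and the fact that $r_m$ is a genuine isometry with $\mathrm{Fix}(r_m) = m$. I expect the cleanest route is to prove once and for all the statement "if a geodesic segment meets $\mathrm{Fix}(r_m)$ and is contained in one of the two closed half-spaces, then it is contained in $\mathrm{Fix}(r_m)$" by the concatenation/reflection argument and the uniqueness of geodesics, and then derive both (1) and (2) formally from it; alternatively, one may simply cite the corresponding convexity properties of walls in Davis/Moussong complexes from \cite{Moussong} or \cite{BHCAT0}, since this lemma is standard.
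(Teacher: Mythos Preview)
The paper does not give a direct argument at all: it simply cites \cite[Lemma~2.2.6]{Nos11} for (1) and \cite[Lemma~2.3.1]{Nos11} for (2), noting that the ``in particular'' clause then follows from the first assertion of (2) together with (1). Your closing suggestion --- to cite the standard convexity properties of walls in the Davis/Moussong complex from the literature --- is therefore exactly what the paper does.

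Your attempted direct argument, however, has a genuine gap. In the proof of (1) you assert that ``a geodesic is determined by any nondegenerate subsegment'' in a complete CAT(0) space, and use this to conclude $r = r_m(r)$. This is false: geodesics in CAT(0) spaces may branch (think of an $\RR$-tree, or indeed the Davis complex itself at a vertex where several maximal cells meet). Two geodesics sharing a nondegenerate subsegment need not coincide beyond it, so knowing $r$ and $r_m(r)$ agree on $[x,y]$ does not force them to agree globally. Without this step your deduction that $r\subseteq m$ collapses.

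The argument for (2) inherits this problem and also becomes circular: you try to derive convexity of the open half-space $H$ by invoking the tangency statement (``if $r\subseteq\overline{H}$ meets $m$ then $r\subseteq m$''), but in the paper's logical order that statement is a \emph{consequence} of convexity of open half-spaces together with (1), not an independent input. A correct self-contained proof needs more than the bare CAT(0) axioms and the fact that $m=\mathrm{Fix}(r_m)$; one typically uses either the piecewise-Euclidean cell structure of $X$ (so that walls are locally hyperplanes and half-spaces are locally Euclidean half-spaces) or the fact that the nearest-point projection onto the convex set $m$ commutes with $r_m$ and hence preserves each half-space. This is precisely the content of the lemmas in \cite{Nos11} that the paper cites.
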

\begin{proof}
(1) is \cite[Lemma~2.2.6]{Nos11} and (2) is \cite[Lemma~2.3.1]{Nos11} (the second statement of (2) follows from the first and (1)).
\end{proof}

Given two points $x,y\in X$, we denote by $[x,y]$ the unique geodesic in $X$ from $x$ to $y$, and we set $]x,y[:=[x,y]\setminus\{x,y\}$, and similarly for $[x,y[$ and $]x,y]$. In Section~\ref{section:CGWNIPPS}, we will restrict our study of $X$ to the case where all proper parabolic subgroups of $W$ are finite. In such cases, $X$ has the additional property that all geodesic segments can be extended, and this is the extra assumption we will need in order to establish the results of that section:

\begin{lemma}\label{lemma:extgeod}
Assume that $W$ is infinite irreducible but all proper parabolic subgroups of $W$ are finite. 
\begin{enumerate}
\item
Let $x$ be the barycentre of a chamber $C$, let $m_1,\dots,m_k$ be the walls of $C$, and for each $i=1,\dots,k$, choose a point $x_i\in [x,r_{m_i}(x)]\setminus \{x\}$. Then the convex hull of $\{x_1,\dots,x_k\}$ contains an open neighbourhood of $x$.
\item 
Every geodesic segment in $X$ can be extended to a geodesic line.
\end{enumerate}
\end{lemma}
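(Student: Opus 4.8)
The plan is to prove part (1) first and then deduce part (2) from it via a standard "longest chain" / Zorn-type argument combined with local extendability. For part (1), the key geometric input is that under the hypothesis that all proper parabolic subgroups of $W$ are finite, the chamber $C$ (realised in $X$) is a compact convex cell whose only walls are $m_1,\dots,m_k$, and the barycentre $x=x_C$ lies in the interior of $C$ relative to... no, actually $x$ is a vertex of the cellular structure, but the point is that the reflections $r_{m_i}$ fix $x$ never — rather, each $r_{m_i}(x)$ is the barycentre of the chamber $C_i$ obtained by reflecting $C$ across $m_i$, and the segments $[x,r_{m_i}(x)]$ cross $m_i$ transversally at its midpoint. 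I would argue that the directions $u_i:=\log_x(x_i)\in T_xX$ (unit tangent vectors at $x$ pointing into $C_i$) positively span the tangent cone at $x$: indeed, since $W$ is irreducible and infinite with all proper parabolics finite, the link of $x$ in $X$ is a spherical building-like complex on which the finite group $\mathrm{Stab}_W(x)=W_{S\setminus\{s\}}$-type data... more concretely, I would invoke that the faces of $C$ at $x$ correspond exactly to the spherical subsets of $S$ and the walls $m_i$ are the supporting hyperplanes of the top-dimensional Coxeter cell, so the convex cone generated by the $u_i$ is the whole tangent space $T_xX\cong \RR^n$; hence any sufficiently short convex combination of the $x_i$ fills a neighbourhood of $x$. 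This uses the CAT(0) structure (geodesics depend continuously on endpoints, $\log_x$ is well-defined and a homeomorphism onto the tangent cone near $x$).

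For part (2), let $r=[a,b]$ be a geodesic segment; I want to extend it past $b$ (the extension past $a$ is symmetric, and iterating gives a line). Consider the set of geodesic segments $[a,c]$ with $r\subseteq [a,c]$, partially ordered by inclusion; by a compactness/limiting argument (using completeness of $X$ and local compactness, which holds since $X$ is a locally finite $M_0$-polyhedral complex — local finiteness follows from $S$ finite and all parabolics proper-finite) it suffices to show that no such maximal segment has a finite endpoint $c$. Suppose $[a,c]$ is a geodesic with $b$ in its interior or $c$ a genuine endpoint; I must produce $c'$ with $[a,c']\supsetneq [a,c]$. Let $C$ be a chamber containing $c$ in its realisation; replacing $c$ by a nearby point I may assume $c=x_C$ is the barycentre (using that $C$ is convex and the segment enters $C$). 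Now apply part (1): the incoming direction $-\log_c(\text{previous point on }r)$ lies in $T_cX$, and since the $u_i=\log_c(x_i)$ positively span $T_cX$, the opposite direction $v$ of the incoming geodesic lies in the convex cone they generate; therefore a short geodesic in direction $v$ stays inside the convex hull of $\{x_1,\dots,x_k\}\subseteq X$, and concatenating it with $[a,c]$ gives a longer path. That the concatenation is still geodesic follows because at $c$ the two directions are opposite (angle $\pi$), and in a CAT(0) space a local geodesic is a geodesic: the concatenation is a local geodesic everywhere (it is a genuine geodesic on $[a,c]$ and on the short extension, and makes angle $\pi$ at the junction $c$), hence globally geodesic. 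This contradicts maximality, so extensions always exist and part (2) follows.

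The main obstacle I anticipate is part (1): making rigorous the claim that the vectors $u_i$ positively span $T_xX$. This requires understanding the local structure of $X$ at the barycentre of a chamber — essentially that the Coxeter cell $P_I$ for the maximal spherical $I$ (or the collection of maximal Coxeter cells at $x$) is a convex polytope whose inward normals at the vertex $x$ are (up to the $W_I$-action) the directions toward the reflected chambers, and that these span because $C$ is a bounded cell (a simplicial cone intersected with the polytope $P_I$, which is compact since $I$ is spherical) with the $m_i$ as its only bounding walls. One must be slightly careful that $x$ may lie on several maximal Coxeter cells (when no single spherical $I$ contains all of $S\setminus\{s\}$... but here all proper subsets are spherical, so actually the relevant faces are well-controlled), and that the "convex hull in $X$" of points near $x$ genuinely agrees with the Euclidean convex hull in the star of $x$ — this is where flatness of $X$ near each vertex (each point has a neighbourhood isometric to a neighbourhood in a cone, and here, with all proper parabolics finite, to a Euclidean or hyperbolic neighbourhood) is used. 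Once the positive-spanning claim is granted, everything else is routine CAT(0) geometry: the local-geodesic-implies-geodesic principle, continuity of geodesics in endpoints, and the Zorn/compactness extension argument.
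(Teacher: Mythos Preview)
Your plan for part~(1) captures the right picture, though the positive-spanning claim needs the concrete input that the paper supplies: for each $j$ the set $\{x,\,r_{m_i}(x)\mid i\neq j\}$ consists of vertices of a single maximal Coxeter cell $\sigma_j$ of dimension $|S|-1$ (corresponding to the spherical subset $S\setminus\{s_j\}$), so the convex hull of $\{x,x_i\mid i\neq j\}$ already contains a neighbourhood of $x$ inside $\sigma_j$; since $x$ is interior to $\bigcup_j\sigma_j$, the union of these pieces works. This is essentially your spanning statement made explicit cell by cell.

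Your plan for part~(2), however, diverges from the paper and contains a gap. The paper does \emph{not} deduce (2) from (1): it observes instead that under the hypothesis every non-maximal spherical $J\subseteq S$ lies in at least two distinct spherical subsets (namely $J\cup\{s\}$ and $J\cup\{t\}$ for distinct $s,t\in S\setminus J$), so $X$ has no free faces in the sense of \cite[II.5.9]{BHCAT0}, and then invokes \cite[II.5.8, 5.10]{BHCAT0} directly. Your attempted reduction to barycentres is the problematic step: if $[a,c]$ is a non-extendable geodesic, you cannot ``replace $c$ by a nearby point'' and assume $c=x_C$ --- the ray from $a$ through $c$ need not pass through any barycentre, and part~(1) gives no information about extendability at points of lower-dimensional faces of Coxeter cells, which is exactly where a free-face obstruction would live. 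One could try to promote (1) to a statement at every point by analysing each link separately, but at that stage you are re-proving the no-free-faces criterion, and the paper's one-line combinatorial check is both shorter and cleaner.
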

\begin{proof}
(1) Note that for each $j=1,\dots,k$, the set $\{x,r_{m_i}(x) \ | \ i\neq j\}$ is a subset of the set of vertices of a maximal (closed) Coxeter cell $\sigma_j$ of $X$ of dimension $k-1$. In particular, the convex hull of $\{x, x_i \ | \ i\neq j\}$ contains an open neighbourhood $V_j$ of $x$ in $\sigma_j$. As $x$ lies in the convex hull of $\{r_{m_1}(x),\dots,r_{m_k}(x)\}$, it lies in the interior of $\bigcup_{j=1}^k\sigma_j$. The union $\bigcup_{j=1}^kV_j$ then yields the desired neighbourhood of $x$ in $X$.

(2) Note that for each non-maximal spherical subset $J\subseteq S$ (so that $|J|\leq |S|-2$, say $s,t\in S\setminus J$ with $s\neq t$), $J$ is properly contained in at least two distinct spherical subsets of $S$ (namely, $J\cup\{s\}$ and $J\cup\{t\}$). This implies that $X$ has \emph{no free faces} in the sense of \cite[II.5.9]{BHCAT0}. Hence the claim follows from \cite[II.5.8 and 5.10]{BHCAT0}.
\end{proof}

Note that Lemma~\ref{lemma:extgeod}(2) in general fails for arbitrary infinite irreducible $W$.


\section{Coxeter groups with an infinite proper parabolic subgroup}\label{section:CGWAIPPS}
Let $(W,S)$ be a Coxeter system and $\Sigma$ its Coxeter complex.

\begin{lemma}\label{lemma:sphericalcriterion}
Let $H\subseteq S$ be spherical, and let $x\in W^H$. Set $J:=\{s\in S \ | \ \ell(sx)=\ell(x)+1\}$. Suppose that $sx\notin W^H$ for all $s\in J$. Then $S$ is spherical. 
\end{lemma}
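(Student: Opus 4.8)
The plan is to translate the hypotheses into statements about residues and projections in $\Sigma$, and then reduce to Lemma~\ref{lemma:MPWcriterionspherical}. Set $R := R_H = R_H(C_0)$, the standard spherical residue of type $H$, and let $C := xC_0$. Since $x \in W^H$, the gate property gives $\proj_{R}(C) = C_0$ (equivalently, $C$ and $C_0$ are separated by no wall of $R$). The set $J = \{s \in S : \ell(sx) = \ell(x)+1\}$ is exactly the type of the residue $R_J(C)$ that one "sees from the $C_0$-side"; more precisely, writing $C = xC_0$, a generator $s$ lies in $J$ iff $sC \ne C$ and $C$ is the projection of ... — I need to be careful about which side. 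The cleanest formulation: $s \in J$ iff $x^{-1}sx$ moves $C_0$ further, i.e. iff $\ell(x^{-1}sx \cdot \text{(something)})$... Let me instead work directly with the residue $\overline{R} := R_J(C)$ of type $J$ containing $C$. I claim $\proj_{\overline{R}}(C_0) = C$: indeed $x^{-1}\overline{R} = R_{x^{-1}Jx}$-ish is not standard, so better to say that for $s \in J$, $\ell(sx) > \ell(x)$ means $C_0$ and $sxC_0$ lie on the same side of the wall between $C$ and $sC$, so $C_0$ is not separated from $C$ by any wall of $\overline{R}$, whence $\proj_{\overline{R}}(C_0) = C$.

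The hypothesis "$sx \notin W^H$ for all $s \in J$" says precisely that for each $s \in J$ there is some $h \in H$ with $\ell(sxh) < \ell(sx)$, i.e. $sxC_0$ is separated from $sxhC_0 \in sR$ by the wall of $R$ with reflection $x h x^{-1}$... — again I want this in terms of projections. The right statement: $sx \notin W^H$ means $\proj_{sR}(C_0) \ne sxC_0$, equivalently (applying $x^{-1}s$, using $W$-equivariance of $\proj$) that $\proj_{R}(\text{something}) \ne C$. Concretely, $sx \notin W^H \iff \proj_R(sxC_0 \cdot \text{?})$... The usable consequence is: $\proj_R(sC) \ne $ the chamber adjacent to $C$, so that applying $\proj_R$ to the gallery from $C$ through $sC$ into $\overline{R}$ does not increase — combined with $\proj_R(C) = C_0$ — this forces $\proj_R(\overline{R})$ to be a single chamber $\{C_0\}$, OR it forces $R$ and $\overline{R}$ to be parallel. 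I expect the dichotomy to resolve as: either $R \cap \overline{R}$ behaviour gives $\proj_R|_{\overline{R}}$ constant, which via the gate property and $\proj_{\overline{R}}(C_0) = C$ is impossible unless $J = \varnothing$ (handled separately, or absorbed), or $R$ and $\overline{R}$ are parallel with $\delta(\overline{R}, R) = x^{-1}$ and $R$ of type $\{s : \ell(sx^{-1})\cdots\}$ — wait, one must match the type condition of Lemma~\ref{lemma:MPWcriterionspherical}, which requires $R$ to have type $\{s \in S : \ell(sx) = \ell(x)+1\}$ with $x = \delta(R,\overline{R})$.

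So the heart of the argument is: show $R$ and $\overline{R} = R_J(xC_0)$ are parallel, with Weyl distance $\delta(R, \overline{R})$ equal to $x^{-1}$ (or $x$, up to the convention $\delta(R',R) = \delta(R,R')^{-1}$), and then observe that by the very definition of $J$, the residue $R$ has type $\{s \in S : \ell(s \cdot \delta(\overline{R},R)) = \ell(\delta(\overline{R},R)) + 1\}$; then Lemma~\ref{lemma:MPWcriterionspherical} applies (with the roles of $R, \overline{R}$ as in its statement) and yields that $S$ is spherical. To prove parallelism I would use the characterisation "$R, \overline{R}$ parallel $\iff \proj_R|_{\overline{R}}$ is a bijection onto $R$": surjectivity is the delicate part. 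The hypothesis $sx \notin W^H$ for all $s \in J$ is exactly what feeds each "missing direction": starting from $\proj_R(C) = C_0$, each $s \in J$ lets us move one chamber inside $\overline{R}$ while the projection onto $R$ moves by the corresponding $h \in H$, and iterating over a reduced word for elements of $W_J$ shows $\proj_R(\overline{R}) = R$ with the map length-preserving, hence bijective. The main obstacle I anticipate is precisely this surjectivity/bijectivity bookkeeping — matching up reduced words in $W_J$ with reduced words in $W_H$ via the double-coset combinatorics, for which Lemma~\ref{lemma:doubleparabdecomp} (applied to $W_{J'} x W_H$ for suitable $J' \supseteq J$) should be the right tool — together with getting the $\delta(R,\overline{R})$ convention to line up with the type hypothesis of Lemma~\ref{lemma:MPWcriterionspherical}.
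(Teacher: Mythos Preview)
Your high-level strategy --- set up two parallel spherical residues with Weyl distance $x$ and invoke Lemma~\ref{lemma:MPWcriterionspherical} --- is exactly right, and is what the paper does. But the execution has two genuine gaps.

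First, the residues are placed on the wrong sides. The condition $x\in W^H$ means $\ell(xh)>\ell(x)$ for all $h\in H$, which geometrically says $\proj_{xR_H}(C_0)=xC_0$; it does \emph{not} say $\proj_{R_H}(xC_0)=C_0$ (that would be $x\in{}^HW$). Likewise, $J=\{s:\ell(sx)>\ell(x)\}$ is defined via \emph{left} multiplication, so the natural $J$-residue to look at is the standard one $R_J$ containing $C_0$, with adjacent chambers $sC_0$ for $s\in J$. Your residue $\overline{R}=R_J(xC_0)=xR_J$ has adjacent chambers $xsC_0$, not $sxC_0$; the chambers $sxC_0$ you keep referring to are neither adjacent to $xC_0$ nor contained in $xR_J$ in general. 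The correct pair is $R:=R_J$ (containing $C_0$) and $R':=xR_H$ (containing $xC_0$), which is how the paper sets things up.

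Second, and more seriously: even after this correction, $R_J$ and $xR_H$ are \emph{not} parallel in general, since there is no reason for $|J|=|H|$. Your proposed bijectivity argument for $\proj_R|_{\overline{R}}$ cannot succeed as stated. The paper's key step, which you are missing, is this: for each $s\in J$, the hypothesis $sx\notin W^H$ forces $\proj_{xR_H}(sC_0)\neq xC_0$, and a short gate-property argument shows that the wall $m_s$ fixed by $s$ actually separates $xC_0$ from $\proj_{xR_H}(sC_0)$. Hence $m_s$ is a wall of $xR_H$, so $s\in\Stab_W(xR_H)=xW_Hx^{-1}$, i.e.\ $\alpha(s):=x^{-1}sx\in H$. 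This gives an injection $\alpha\colon J\hookrightarrow H$, and the correct parallel partner of $R_J$ is the \emph{sub-residue} $\overline{R}:=xR_{\alpha(J)}\subseteq xR_H$; parallelism is then immediate since $\Stab_W(R_J)=W_J=xW_{\alpha(J)}x^{-1}=\Stab_W(\overline{R})$, and $J$ is automatically spherical. No iterative surjectivity bookkeeping or appeal to Lemma~\ref{lemma:doubleparabdecomp} is needed.
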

\begin{proof}
Let $R:=R_J$ be the standard $J$-residue, and $R':=xR_H$ the $H$-residue containing $xC_0$. Since $x\in W^H$ by assumption, $xC_0=\proj_{R'}(C_0)$. 

Let $s\in J$. Since $sx\notin W^H$, 
$$sxC_0\neq\proj_{sxR_H}(C_0)=s\proj_{R'}(sC_0),$$
and hence $D_s:=\proj_{R'}(sC_0)\neq xC_0$ (see Figure~\ref{figure:sphercrit}). This implies that the wall $m_s$ fixed by $s$ (that is, the wall separating $C_0$ from $sC_0$) coincides with the wall separating $xC_0=\proj_{R'}(C_0)$ from $D_s=\proj_{R'}(sC_0)$ (recall that $\proj_{R'}$ does not increase $\dc$, so that $xC_0$ and $D_s$ are adjacent): indeed, suppose for a contradiction that $xC_0$ and $D_s$ lie on the same side of $m_s$, say on the same side of $m_s$ as $C_0$ (otherwise we exchange the roles of $C_0$ and $sC_0$). Then the projection of $D_s$ on the standard $s$-residue $\{C_0,sC_0\}$ is $C_0$, that is, $\dc(D_s,sC_0)=\dc(D_s,C_0)+1$. On the other hand, the gate property implies that
$$\dc(D_s,sC_0)=\dc(xC_0,sC_0)-1\leq\dc(xC_0,C_0)=\dc(D_s,C_0)-1,$$
a contradiction. We have thus showed that $D_s=sxC_0$. Let $\alpha(s)\in H$ be such that $D_s$ and $xC_0$ are $\alpha(s)$-adjacent, so that $D_s=x\alpha(s)C_0$. This defines a map $\alpha\co J\to H$ such that $\alpha(s)=x\inv sx$ for all $s\in J$.

Let now $\overline{R}:=xR_{\alpha(J)}\subseteq R'$ be the $\alpha(J)$-residue containing $xC_0$. Then the residues $R$ and $\overline{R}$ have the same stabiliser $W_J=xW_{\alpha(J)}x\inv$ (as $x\alpha(J)x\inv=J$), and are thus parallel. Moreover, $x=\delta(R,\overline{R})$ as $xC_0=\proj_{R'}(C_0)=\proj_{\overline{R}}(C_0)$. Finally, as $\alpha(J)\subseteq H$ and $H$ is spherical by assumption, both residues $R$ and $\overline{R}$ are spherical. Lemma~\ref{lemma:MPWcriterionspherical} then implies that $S$ is spherical, as desired. 
\end{proof}

\begin{figure}

  \centering
  \includegraphics[trim = 35mm 28mm 43mm 40mm, clip, width=12cm]{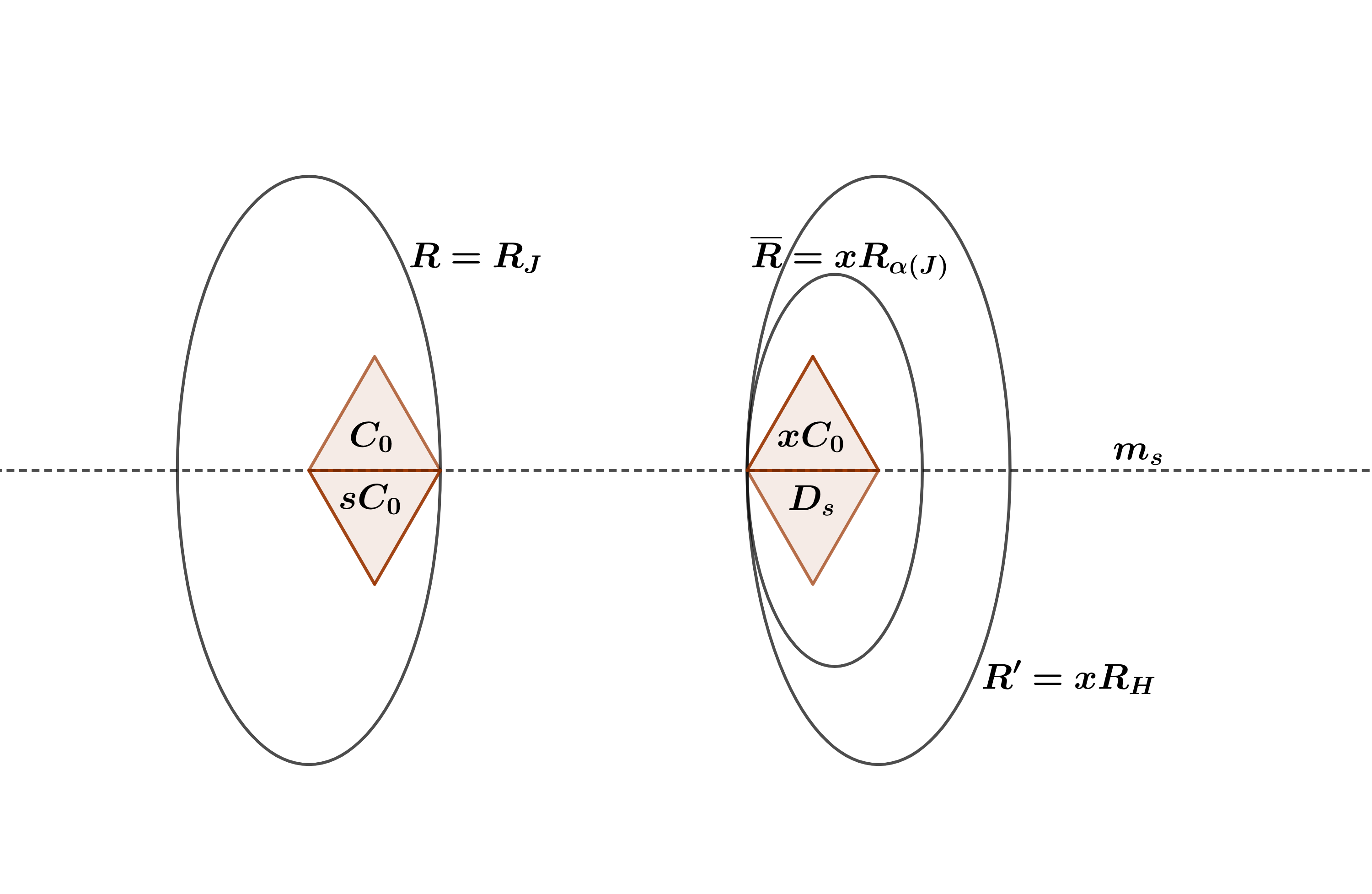}
  \captionof{figure}{Lemma~\ref{lemma:sphericalcriterion}}
  \label{figure:sphercrit}

\end{figure}

\begin{definition}
Given $w,w'\in W$ and $s\in S$, write $w\stackrel{s}{\leftarrow}w'$ if $w'=sws$ and $\ell(w)\leq\ell(w')$. For $I\subseteq S$, write $w\stackrel{I}{\leftarrow} w'$ if there is a sequence  $w=w_0\stackrel{s_1}{\leftarrow}w_1\stackrel{s_2}{\leftarrow}\dots \stackrel{s_k}{\leftarrow}w_k=w'$ for some $s_1,\dots,s_k\in I$. 
We further set
\begin{gather*}
  U^+_I(w):=\{w'\in W \ | \ w\stackrel{I}{\leftarrow} w'\}
  \text{.}
\end{gather*}
\end{definition}

\begin{lemma}\label{lemma:keylemma}
Let $w\in W$. Suppose there exists a non-spherical subset $I\subseteq S$ such that $H:=I\cap \overline{w}I\overline{w}\inv$ is spherical, where $\overline{w}\in \prescript{I}{}{W}^I$ is the unique element of minimal length in $W_IwW_I$. Then $U^+_I(w)$ is infinite.
\end{lemma}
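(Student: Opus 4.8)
\emph{Plan.} The plan is to work entirely inside the double coset $D:=W_IwW_I=W_I\overline{w}W_I$ and to keep track of conjugates of $w$ via their canonical double-coset expression. First I would invoke Lemma~\ref{lemma:doubleparabdecomp}, applied with both parabolic subgroups equal to $W_I$ and with $\overline{w}$ (which lies in $\prescript{I}{}{W}^I$ by hypothesis) in the role of $w$: every element of $D$ is uniquely of the form $u\overline{w}v$ with $u\in W_I^H$ and $v\in W_I$, and for such an element $\ell(u\overline{w}v)=\ell(u)+\ell(\overline{w})+\ell(v)$. In particular $w$ itself has a canonical form $w=u_0\overline{w}v_0$. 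The goal is then to build a sequence $w=w_0\stackrel{s_1}{\leftarrow}w_1\stackrel{s_2}{\leftarrow}\cdots$ with $s_i\in I$ for which the canonical form $w_n=u_n\overline{w}v_n$ satisfies $\ell(u_n)=\ell(u_0)+n$; since then $\ell(w_n)\geq\ell(u_n)\to\infty$, the set $U^+_I(w)$ contains infinitely many distinct elements.

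For the inductive step, given $w_n=u_n\overline{w}v_n$ with $u_n\in W_I^H$, I would pick $s=s_{n+1}\in I$ with $\ell(su_n)=\ell(u_n)+1$ and $su_n\in W_I^H$ (existence discussed below), and set $w_{n+1}:=sw_ns=(su_n)\overline{w}(v_ns)$. Since $su_n\in W_I^H$ and $v_ns\in W_I$, the uniqueness part of Lemma~\ref{lemma:doubleparabdecomp} identifies this as the canonical form of $w_{n+1}$, so $u_{n+1}=su_n$ and $\ell(u_{n+1})=\ell(u_n)+1$ as wanted. For the length comparison, the additive formula gives $\ell(w_{n+1})=\ell(su_n)+\ell(\overline{w})+\ell(v_ns)\geq(\ell(u_n)+1)+\ell(\overline{w})+(\ell(v_n)-1)=\ell(w_n)$; the point is simply that the \emph{forced} gain of $1$ on the left factor absorbs the worst possible loss of $1$ incurred by multiplying $v_n$ by $s$ on the right. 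Hence $w_n\stackrel{s_{n+1}}{\leftarrow}w_{n+1}$ and the induction continues.

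The main obstacle — and the whole content of the argument — is guaranteeing at each stage a simple reflection $s\in I$ with $\ell(su_n)=\ell(u_n)+1$ and $su_n\in W_I^H$, i.e. that the left factor $u_n\in W_I^H$ can always be lengthened by one generator while remaining in $W_I^H$. This is precisely where the hypotheses ``$H$ spherical, $I$ not spherical'' enter: I would apply Lemma~\ref{lemma:sphericalcriterion} to the standard parabolic subgroup $W_I$ equipped with its Coxeter generating set $I$ (the lemma, together with everything feeding into its proof, holds for an arbitrary Coxeter system, hence for $(W_I,I)$), taking the spherical set $H\subseteq I$ and $x:=u_n\in W_I^H$. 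If no such $s$ existed, then $su_n\notin W_I^H$ would hold for every $s$ in the set $\{s\in I \mid \ell(su_n)=\ell(u_n)+1\}$, and Lemma~\ref{lemma:sphericalcriterion} would force $I$ to be spherical, contrary to assumption. This produces $s_{n+1}$, completes the induction, and shows $U^+_I(w)$ is infinite; no geometric input beyond what is already packaged in Lemmas~\ref{lemma:doubleparabdecomp} and~\ref{lemma:sphericalcriterion} is needed.
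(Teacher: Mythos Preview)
Your argument is correct, and it is in fact cleaner than the paper's own proof. Both proofs rest on the same two ingredients (Lemma~\ref{lemma:doubleparabdecomp} for the canonical decomposition in $W_I\overline{w}W_I$, and Lemma~\ref{lemma:sphericalcriterion} applied inside $(W_I,I)$ to lengthen the left factor while staying in $W_I^H$), but they are organised differently. The paper argues by contradiction: it assumes $U^+_I(w)$ finite, passes to an element of maximal length, refines the canonical form to $w=xh\overline{w}y$ with $x\in W_I^H$, $h\in W_H$, $y\in\prescript{\overline{H}}{}{W}_I$, imposes extremality conditions on $\ell(y)$ and $\ell(x)$, and then uses the deletion condition to analyse what happens to the $y$-part under conjugation, eventually reaching a contradiction with the maximality of $\ell(x)$.

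Your direct construction bypasses all of that. The key observation you exploit --- and which the paper does not isolate --- is that once $su_n\in W_I^H$ is secured, the product $(su_n)\overline{w}(v_ns)$ is \emph{already} in canonical form (by the uniqueness clause of Lemma~\ref{lemma:doubleparabdecomp}), so no further analysis of the right factor is needed: the guaranteed $+1$ on the left absorbs any $-1$ on the right, and $\ell(u_n)$ grows linearly. This yields an explicit infinite non-decreasing chain and makes the contradiction set-up, the finer $xh\overline{w}y$ decomposition, the extremality bookkeeping, and the deletion-condition step all unnecessary. What the paper's approach buys is perhaps a more symmetric treatment of the left and right parabolic factors, but for the statement at hand your route is shorter and more transparent.
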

\begin{proof}
Suppose for a contradiction that $U^+_I(w)$ is finite. Up to modifying $w$ inside $U^+_I(w)$, we may then assume $w$ to be of maximal length in $U^+_I(w)$ --- note that $U^+_I(w)\subseteq W_IwW_I=W_I\overline{w}W_I$. By Lemma~\ref{lemma:doubleparabdecomp}, we have a unique decomposition $w=xh\overline{w}y$ with $h\in W_H$, $x\in W_I^H$ and $y\in \prescript{\overline{H}}{}{W}_I$, where $\overline{H}:=\overline{w}\inv I\overline{w}\cap I=\overline{w}\inv H\overline{w}$, and we have $\ell(w)=\ell(x)+\ell(h)+\ell(\overline{w})+\ell(y)$. Up to further modifying $w$ inside $U^+_I(w)$, we may assume that $\ell(y)$ is minimal, and then that $\ell(x)$ is maximal (among elements of $U^+_I(w)$ with minimal $\ell(y)$).

Since $I$ is not spherical by assumption, Lemma~\ref{lemma:sphericalcriterion} yields some $s\in I$ with $\ell(sx)=\ell(x)+1$ such that $sx\in W_I^H$. Thus $\ell(sxh)=\ell(sx)+\ell(h)=\ell(x)+\ell(h)+1$. Moreover, $\ell(sw)=\ell(sxh)+\ell(\overline{w})+\ell(y)=\ell(w)+1$ by Lemma~\ref{lemma:doubleparabdecomp}, so that $\ell(sws)\geq\ell(w)$ and $sws\in U^+_I(w)$.

Note that $\ell(ys)=\ell(y)+1$, for otherwise $sws=sxh\overline{w}ys$ with $ys\in \prescript{\overline{H}}{}{W}_I$ of length $\ell(ys)<\ell(y)$, contradicting the minimality assumption on $\ell(y)$.

This implies that $ys\notin \prescript{\overline{H}}{}{W}_I$, for otherwise $\ell(sws)=\ell(sxh\overline{w}ys)=\ell(sxh)+\ell(\overline{w})+\ell(ys)=\ell(w)+2$ again by Lemma~\ref{lemma:doubleparabdecomp}, contradicting the maximality assumption on $\ell(w)$.

Write $ys=s_{\overline{H}}y'$ for some $s_{\overline{H}}\in \overline{H}$ and $y'\in W_I$ with $\ell(y')=\ell(y)$. Then the \emph{deletion condition} (cf. \cite[\S2.3]{BrownAbr}) implies that one can get a reduced expression for $y$ by deleting two letters in its non-reduced expression $s_{\overline{H}}\boldsymbol{y'}s$, where $\boldsymbol{y'}$ is a reduced expression for $y'$. One of these letters must be $s$ (because $s_{\overline{H}}\boldsymbol{y'}$ is reduced) and the other must be $s_{\overline{H}}$ (because $y\in \prescript{\overline{H}}{}{W}_I$). Thus, $y'=y$.  Set $s_H:=\overline{w}s_{\overline{H}}\overline{w}\inv\in H$. Then
$$sws=sxh\overline{w}ys=sxh\overline{w}s_{\overline{H}}y=(sx)hs_H\overline{w} y$$
with $hs_H\in H$, $sx\in W_I^H$ and $y\in \prescript{\overline{H}}{}{W}_I$, contradicting the maximality assumption on $\ell(x)$.
\end{proof}

\begin{lemma}\label{lemma:wdoesnotnormalisethenUinf}
Let $w\in W$. Suppose there exists a non-spherical subset $I\subseteq S$ all whose proper subsets are spherical, such that $w$ does not normalise $W_I$. Then $U^+_I(w)$ is infinite.
\end{lemma}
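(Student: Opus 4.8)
The plan is to apply Lemma~\ref{lemma:keylemma} directly. Let $\overline{w}\in\prescript{I}{}{W}^I$ be the minimal-length element of the double coset $W_IwW_I$, and put $H:=I\cap\overline{w}I\overline{w}\inv$, exactly as in that lemma. Since $I$ is non-spherical while all its proper subsets are spherical, the set $H\subseteq I$ is spherical precisely when $H\neq I$. So it suffices to establish that $H\subsetneq I$: then Lemma~\ref{lemma:keylemma}, applied to our non-spherical $I$, gives that $U^+_I(w)$ is infinite.

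To see that $H\neq I$, I argue by contradiction. Assume $H=I$, i.e. $I\subseteq\overline{w}I\overline{w}\inv$, equivalently $\overline{w}\inv I\overline{w}\subseteq I$. Conjugation by $\overline{w}\inv$ is injective, so $|\overline{w}\inv I\overline{w}|=|I|<\infty$, and the inclusion $\overline{w}\inv I\overline{w}\subseteq I$ is therefore an equality $\overline{w}\inv I\overline{w}=I$; in particular $\overline{w}$ normalises $W_I$. Now write $w=u\overline{w}v$ with $u,v\in W_I$, which is possible since $w\in W_I\overline{w}W_I$. Then
\[ wW_Iw\inv=u\overline{w}\,(vW_Iv\inv)\,\overline{w}\inv u\inv=u\,(\overline{w}W_I\overline{w}\inv)\,u\inv=uW_Iu\inv=W_I, \]
so $w$ normalises $W_I$, contradicting the hypothesis. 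Hence $H\subsetneq I$, and we are done.

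I do not anticipate a genuine obstacle here. The one point that needs a little care is the passage from the a priori one-sided inclusion $\overline{w}\inv I\overline{w}\subseteq I$ to the equality $\overline{w}\inv I\overline{w}=I$, which relies on the finiteness of $S$ (so that conjugation, being a bijection of the finite set $I$ onto its image, cannot drop its cardinality); the remainder is just the elementary observation that ``normalising $W_I$'' is unaffected by multiplying $w$ on either side by elements of $W_I$, hence transfers between $w$ and its double-coset representative $\overline{w}$.
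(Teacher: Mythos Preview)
Your proof is correct and follows essentially the same approach as the paper: both reduce to Lemma~\ref{lemma:keylemma} by showing that $H=I\cap\overline{w}I\overline{w}^{-1}$ is a proper (hence spherical) subset of $I$, arguing that $H=I$ would force $\overline{w}$, and therefore $w$, to normalise $W_I$. You have simply spelled out explicitly the cardinality step and the transfer of the normalising property from $\overline{w}$ to $w$, which the paper leaves implicit.
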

\begin{proof}
Note that if $\overline{w}$ is the minimal length element of $W_IwW_I$, then $\overline{w}I\overline{w}\inv\neq I$ for otherwise $w$ would normalise $W_I$. Hence $H:=I\cap \overline{w}I\overline{w}\inv$ is spherical. The conclusion then follows from Lemma~\ref{lemma:keylemma}.
\end{proof}

\begin{prop}\label{prop:U+Swinfinitenoncompacthyp}
Let $(W,S)$ be irreducible. Assume that not all proper subsets of $S$ are spherical. Then $U^+_S(w)$ is infinite for all $w\in W\setminus\{1\}$.
\end{prop}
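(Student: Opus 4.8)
The plan is to reduce the general statement to the situation covered by Lemma~\ref{lemma:wdoesnotnormalisethenUinf}. Fix $w \in W \setminus \{1\}$. Since not all proper subsets of $S$ are spherical, we may choose a non-spherical subset $I \subseteq S$ all of whose proper subsets are spherical (take any non-spherical $I$ of minimal cardinality among non-spherical subsets; by minimality all its proper subsets are spherical). If $w$ does not normalise $W_I$, then Lemma~\ref{lemma:wdoesnotnormalisethenUinf} immediately gives that $U^+_I(w)$ is infinite, and since $U^+_I(w) \subseteq U^+_S(w)$ we are done. So the crux is the case in which $w$ normalises $W_I$ for \emph{every} such minimal non-spherical $I$.

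The idea for that case is that, because $W$ is irreducible, normalising all such $I$ is far too restrictive for a nontrivial $w$ — we should be able to derive a contradiction, or else conjugate $w$ by some generator into an element that no longer normalises the relevant parabolic while not decreasing length. Concretely, I would first note that if $w$ normalises $W_I$ with $I$ non-spherical, then in particular $w$ normalises the (infinite) parabolic $W_I$; one then wants to exploit the interplay between $w$ and the Coxeter generators $s \in I$. The key sub-step: if $w$ normalises $W_I$ but $w \notin W_I$ (equivalently $w$ acts nontrivially on the residue structure, or $w$ is not central in $W_I$ — here one uses that infinite irreducible Coxeter groups have trivial centre, cited in the introduction), then one should be able to find $s \in S$ with $\ell(sws) \geq \ell(w)$ and with $sws$ \emph{not} normalising some minimal non-spherical subset; replacing $w$ by $sws$ and iterating, or appealing directly to Lemma~\ref{lemma:keylemma} with a suitable $I$, then produces the infinite set $U^+_S(w)$. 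If instead $w \in W_I$, then since $I$ is non-spherical and irreducible components of $I$ that are non-spherical are infinite, one works inside $W_I$: either $w$ is already handled by a lower-rank instance, or one uses that $W$ is irreducible to find a generator $t \in S \setminus I$ that does not commute with all of $I$, and conjugating by $t$ moves $w$ out of $W_I$ without shrinking it, again reducing to Lemma~\ref{lemma:wdoesnotnormalisethenUinf}.

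The main obstacle I anticipate is precisely the bookkeeping in the case $w \in W_I$ (or more generally $w$ normalising every minimal non-spherical parabolic): one must rule out that $w$ is trapped inside a spherical-by-small region where conjugation by generators keeps returning normalising elements of the same length, and this is where irreducibility of $(W,S)$ and the connectedness of the Coxeter diagram must be used essentially — presumably via a path in the diagram from the support of $w$ to a non-spherical edge/subset, pushing $w$ along it by successive conjugations. I would structure the final argument as: (i) pick minimal non-spherical $I$; (ii) if some conjugate $w' \in U^+_S(w)$ fails to normalise $W_I$, invoke Lemma~\ref{lemma:wdoesnotnormalisethenUinf} for $w'$ and note $U^+_I(w') \subseteq U^+_S(w') \subseteq U^+_S(w)$; (iii) otherwise show, using irreducibility and the diagram, that such a $w'$ must exist unless $w = 1$, contradiction. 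The only genuinely delicate point is verifying the length non-decrease $\ell(w) \leq \ell(sws)$ at each conjugation step in (iii), which should follow by choosing $s$ so that $\ell(sw) > \ell(w)$ whenever possible and otherwise analysing the short case $\ell(sw) < \ell(w)$ via the exchange condition as in the proof of Lemma~\ref{lemma:keylemma}.
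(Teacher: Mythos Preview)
Your plan is the paper's: pick a minimal (hence irreducible) non-spherical proper subset $I\subsetneq S$, find some $w'\in U^+_S(w)$ that does not normalise $W_I$, and invoke Lemma~\ref{lemma:wdoesnotnormalisethenUinf} together with $U^+_I(w')\subseteq U^+_S(w')\subseteq U^+_S(w)$. The gap is that you have not actually carried out your step~(iii), and the two facts that make it immediate are absent from your sketch.

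First, the length control while conjugating is much cheaper than you fear: if $s\notin\supp(w)$ and $s$ does not commute with every element of $\supp(w)$, then $\ell(sws)=\ell(w)+2$ (see e.g.\ \cite[Lemma~2.37]{BrownAbr}). Since $S$ is irreducible, iterating this lets you enlarge the support inside $U^+_S(w)$ until you reach some $w'$ with $\supp(w')=S$; no exchange-condition case analysis is required. Second, the normaliser of such a parabolic is itself standard: $N_W(W_I)=W_I\times W_{I^\perp}=W_{I\cup I^\perp}$, where $I^\perp=\{s\in S\mid st=ts\ \forall t\in I\}$ (see \cite[Lemma~2.1]{openKM}); irreducibility of $S$ together with $I\subsetneq S$ forces $I\cup I^\perp\neq S$. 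Hence any $w'$ with full support automatically lies outside $N_W(W_I)$, and you are done. The obstacles you anticipate (getting trapped among normalising conjugates, controlling length along a diagram path) dissolve once these two ingredients are in hand.
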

\begin{proof}
By assumption, there is an irreducible non-spherical subset $I\subsetneq S$. Without loss of generality, we may further assume all proper subsets of $I$ are spherical. 

Let $w\in W\setminus\{1\}$. Note that if $s\notin\supp(w)$ (where $\supp(w)$ denotes the support of $w$, namely the letters from $S$ appearing in any reduced decomposition of $w$), and if $s$ does not commute with every element of $\supp(w)$, then $\ell(sws)=\ell(w)+2$ (see e.g. \cite[Lemma~2.37]{BrownAbr}). Up to modifying $w$ inside $U^+_S(w)$, we may thus assume that $\supp(w)=S$. 

On the other hand, $N_W(W_I)=W_I\times W_{I^{\perp}}=W_{I\cup I^{\perp}}$, where $I^{\perp}:=\{s\in S \ | \ st=ts \ \forall t\in I\}$ (see e.g. \cite[Lemma~2.1]{openKM}). As $I\cup I^{\perp}\neq S$ (because $S$ is irreducible), we conclude that $w\notin N_W(W_I)$. 
The conclusion then follows from Lemma~\ref{lemma:wdoesnotnormalisethenUinf}.
\end{proof}


\section{The affine and compact hyperbolic cases}\label{section:CGWNIPPS}
Let $(W,S)$ be an infinite irreducible Coxeter system, and let $X$ be its Davis complex. 

\begin{definition}
Let $w\in W$. To each chamber $C=vC_0\in\Ch(X)$ ($v\in W$), we associate the conjugate $\pi_w(C):=v\inv wv$ of $w$. 
We call a gallery $\Gamma=(D_0,D_1,\dots,D_k)$ in $X$ {\bf $w$-non-decreasing} if $\ell(\pi_w(D_i))\leq\ell(\pi_w(D_{i+1}))$ for all $i$. We call $\Gamma$ {\bf $w$-flat} if $\ell(\pi_w(D_i))=\ell(\pi_w(D_{i+1}))$ for all $i$.
Finally, we call $C\in\Ch(X)$ a {\bf $w$-bad} chamber if every $w$-non-decreasing gallery from $C$ is $w$-flat.
\end{definition}

\begin{lemma}\label{lemma:geometrictranslations}
Let $w\in W$ and $\Gamma=(D_0,D_1,\dots,D_k)$ be a gallery in $X$ of type $(s_1,\dots,s_k)$. Then:
\begin{enumerate}
\item $\Gamma$ is $w$-non-decreasing if and only if $\pi_w(D_{i-1})\stackrel{s_i}{\leftarrow}\pi_w(D_{i})$ for each $i=1,\dots,k$.
\item If no chamber of $X$ is a $w$-bad chamber, then $U^+_S(w')$ is infinite for all conjugates $w'\in W$ of $w$.
\end{enumerate}
\end{lemma}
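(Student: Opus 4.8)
The plan is to prove the two parts in order, as (2) will rely on (1).

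For part (1), I would simply unwind the definitions. Fix a gallery $\Gamma=(D_0,\dots,D_k)$ of type $(s_1,\dots,s_k)$, and write $D_i = v_iC_0$. Since $D_{i-1}$ and $D_i$ are $s_i$-adjacent, we have $v_i = v_{i-1}s_i$ (the Weyl distance $\delta(D_{i-1},D_i) = v_{i-1}\inv v_i$ equals $s_i$). Hence
$$\pi_w(D_i) = v_i\inv w v_i = s_i v_{i-1}\inv w v_{i-1} s_i = s_i\,\pi_w(D_{i-1})\,s_i.$$
By definition, $\pi_w(D_{i-1})\stackrel{s_i}{\leftarrow}\pi_w(D_i)$ means precisely that $\pi_w(D_i) = s_i\pi_w(D_{i-1})s_i$ together with $\ell(\pi_w(D_{i-1}))\leq\ell(\pi_w(D_i))$. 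The first condition always holds by the computation above, so $\pi_w(D_{i-1})\stackrel{s_i}{\leftarrow}\pi_w(D_i)$ holds if and only if $\ell(\pi_w(D_{i-1}))\leq\ell(\pi_w(D_i))$, which is exactly the condition that $\Gamma$ be $w$-non-decreasing at step $i$. Conjunction over $i=1,\dots,k$ gives the equivalence. I expect this part to be entirely routine.

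For part (2), let $w' = u\inv w u$ be a conjugate of $w$ for some $u\in W$, and consider the chamber $C:=uC_0$, so that $\pi_w(C) = w'$. Since no chamber is $w$-bad, in particular $C$ is not $w$-bad, so there exists a $w$-non-decreasing gallery $\Gamma = (D_0 = C, D_1, \dots)$ starting at $C$ that is \emph{not} $w$-flat. I would like to produce from this an infinite sequence witnessing $w'\in U^+_S(w')$ with lengths tending to infinity; the subtlety is that a single finite non-flat non-decreasing gallery only gives a \emph{strict} length increase somewhere, not an unbounded one. The way around this is to iterate: having reached a chamber $D$ with $\ell(\pi_w(D)) > \ell(\pi_w(C))$ along a $w$-non-decreasing gallery, apply the non-$w$-bad hypothesis again to $D$ to extend further with another strict increase, and repeat. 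This produces an infinite $w$-non-decreasing gallery $(C = D_0, D_1, D_2, \dots)$ along which $\ell(\pi_w(D_i))$ takes infinitely many distinct values, hence (being non-decreasing and integer-valued) tends to $\infty$. By part (1), writing $s_i$ for the type of the $i$-th step and $w_i := \pi_w(D_i)$, we get $w_0 = w' \stackrel{s_1}{\leftarrow} w_1 \stackrel{s_2}{\leftarrow} w_2 \stackrel{s_3}{\leftarrow}\cdots$ with $\ell(w_n)\to\infty$, and $w_n\in U^+_S(w')$ for all $n$, so $U^+_S(w')$ is infinite.

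The main obstacle is the iteration argument in part (2): one must be slightly careful that "not $w$-flat" for a finite gallery only yields one strict increase, and argue that concatenating such galleries (each time restarting the non-$w$-bad hypothesis at the current endpoint) is legitimate and yields a genuinely infinite strictly-unbounded sequence. A clean way to phrase it: define a sequence of chambers $C = E_0, E_1, E_2, \dots$ with $\ell(\pi_w(E_{j})) < \ell(\pi_w(E_{j+1}))$, each $E_{j+1}$ reached from $E_j$ by a $w$-non-decreasing gallery (obtained from non-$w$-badness of $E_j$, truncated at the first chamber of strictly larger $\pi_w$-length), and then concatenate all these galleries; the result is $w$-non-decreasing, infinite, and has $\ell\circ\pi_w$ unbounded, hence tending to infinity since it is non-decreasing. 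Apart from this bookkeeping, everything follows formally from part (1) and the definitions.
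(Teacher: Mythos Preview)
Your proof is correct and follows essentially the same approach as the paper. Part~(1) is identical; for part~(2), the paper instead argues by contrapositive (if $U^+_S(w')$ were finite, replace $w'$ by an element of maximal length in $U^+_S(w')$ and observe via (1) that the corresponding chamber is $w$-bad), which sidesteps your iteration bookkeeping but is mathematically equivalent.
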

\begin{proof}
For (1), note that two chambers $C,D$ are $s$-adjacent ($s\in S$) if and only if $C=vC_0$ and $D=vsC_0$ for some $v\in W$ if and only if $\pi_w(D)=s\pi_w(C)s$.

For (2), suppose $U^+_S(w')$ is finite for some conjugate $w'\in W$ of $w$. Without loss of generality, we may assume $w'$ to be of maximal length inside $U^+_S(w')$. Then by definition and in view of (1), if $C\in\Ch(X)$ is such that $\pi_w(C)=w'$, every $w$-non-decreasing gallery from $C$ is flat, i.e. $C$ is a bad chamber.
\end{proof}

\begin{definition}
For $w\in W$, define the function
$$\dist_w\co X\to\RR:x\mapsto\dist(x,wx).$$ 
Note that $\dist_w$ is a convex function (see \cite[II,6.2(3)]{BHCAT0}), that is, for each geodesic $\gamma\co I\to X$ defined on the interval $I\subseteq\RR$, the function $\dist_w\circ\gamma\co I\to \RR$ is convex.
\end{definition}

Given a subset $C$ of $X$, we write $\Int(C)$ for its interior.

\begin{prop}\label{prop:dvsdchvsdw}
Let $w\in W$. Let $C$ be a chamber, $m$ a wall of $C$, and set $D:=r_m(C)$. Assume that $\dc(C,wC)<\dc(D,wD)$. Let $x\in \Int(C)$ and set $y:=r_m(x)\in \Int(D)$. Set also $z:=[x,y]\cap m$. Then $\dist_w$ is strictly increasing on $[z,y]$.
\end{prop}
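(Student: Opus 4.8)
The plan is to read off, from the chamber‑distance hypothesis, the position of $C,D,wC,wD$ relative to the walls $m$ and $wm$, and then to deduce the strict monotonicity of $\dist_w$ from its convexity, by showing that the one‑sided derivative of $\dist_w$ along $[x,y]$ at $z$ (in the direction of $y$) is strictly positive. First I would reformulate the hypothesis in terms of walls. The chambers $C$ and $D=r_m(C)$ are separated only by $m$, and $wC$ and $wD$ only by $wm$; hence a wall $\mu\notin\{m,wm\}$ separates $C$ from $wC$ if and only if it separates $D$ from $wD$. Since $\pi_w(D)$ is conjugate to $\pi_w(C)$, the integer $\dc(D,wD)-\dc(C,wC)$ is even; being of absolute value at most $2$ and positive, it equals $2$. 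It follows that $m\neq wm$, that both $m$ and $wm$ separate $D$ from $wD$, and that neither separates $C$ from $wC$. Writing $\alpha$ (resp.\ $\beta$) for the half‑space bounded by $m$ (resp.\ $wm$) containing $C$, we get $C,wC\subseteq\alpha\cap\beta$, $D\subseteq\overline{\alpha^c}\cap\beta$ and $wD\subseteq\alpha\cap\overline{\beta^c}$. In particular $x,wx\in\Int\alpha\cap\Int\beta$; moreover $y\in\Int D$, and since $[x,y]$ lies in the convex set $\overline C\cup\overline D$ and meets $m$ only in $z$, one has $z\in\overline C\cap\overline D$ and $]z,y]\subseteq\Int D$. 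Applying $w$ then gives $y,p\in\Int(\alpha^c)\cap\Int\beta$ and $wy,wp\in\Int\alpha\cap\Int(\beta^c)$ for every $p\in{]z,y]}$.

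Next I would pin down the local geometry at $z$ and $wz$. Since $r_m$ fixes $m$ pointwise and swaps $x$ and $y$, every point of $m$ is equidistant from $x$ and $y$; uniqueness of nearest‑point projection in the CAT(0) space $X$ then forces $\proj_m(x)=\proj_m(y)=z$, i.e.\ the geodesic $[x,y]$ meets $m$ orthogonally at $z$ — and, applying $w$, $[wx,wy]$ meets $wm$ orthogonally at $wz$. Feeding $[x,y]\subseteq\beta$ and $[wx,wy]\subseteq\alpha$ into Lemma~\ref{lemma:convexity_walls}(2) rules out $z\in wm$ and $wz\in m$; together with Lemma~\ref{lemma:convexity_walls} applied to $[z,wz]$, this shows that the geodesic $[z,wz]$ leaves $z$ strictly into $\Int\alpha$ and reaches $wz$ strictly from inside $\Int\beta$ (in particular $z\neq wz$). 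Let $\xi$ be the direction of $[z,wz]$ at $z$, let $\eta$ be the direction of $[wz,z]$ at $wz$, and let $u$ be the direction of $[x,y]$ at $z$ towards $y$. The orthogonality just established means that $u$ is one of the two poles of the spherical suspension $\Sigma_zX\cong\Sigma_zm\ast S^0$ describing the link of $X$ at the wall point $z$ (a standard feature of Davis complexes) — namely the pole pointing into $\Int(\alpha^c)$ — and that $dw(u)$ is the pole of $\Sigma_{wz}X\cong\Sigma_{wz}(wm)\ast S^0$ pointing into $\Int(\beta^c)$.

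To conclude, by convexity of $\dist_w$ along $[x,y]$ it suffices to prove that the one‑sided derivative of $t\mapsto\dist_w(\gamma(t))$ at $z$ in the direction $u$ is positive, where $\gamma$ is a unit‑speed parametrisation of $[x,y]$. By the first‑variation formula for the displacement function $\dist_w=\dist(\,\cdot\,,w\,\cdot\,)$, this derivative equals $-\cos\angle_z(u,\xi)-\cos\angle_{wz}(dw(u),\eta)$. Now $\xi$ points strictly into the open hemisphere $\Int\alpha$, so in $\Sigma_zm\ast S^0$ its angle to the pole $u$ (which points into the opposite hemisphere $\Int(\alpha^c)$) is $>\pi/2$; hence $\cos\angle_z(u,\xi)<0$, and symmetrically $\cos\angle_{wz}(dw(u),\eta)<0$. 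Thus the derivative is positive, and a convex function on $[x,y]$ whose right‑derivative at the interior point $z$ is positive is strictly increasing on $[z,y]$, which is the claim.

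The step I expect to be the main obstacle is the second one: verifying that $[x,y]\perp m$ at $z$ and $[wx,wy]\perp wm$ at $wz$, excluding the degenerate configurations $z\in wm$, $wz\in m$, or $wz=z$ (in which both first‑variation terms could collapse to $0$), and correctly invoking the spherical‑suspension structure of links at points of a wall of the Davis complex together with the first‑variation formula for $\dist_w$. The wall bookkeeping of the first step and the convexity argument of the last step are routine.
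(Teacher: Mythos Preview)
Your argument is correct in outline and reaches the goal, but it takes a genuinely different route from the paper's proof, and the two approaches are worth contrasting.

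The paper's proof is entirely elementary and avoids both of the technical inputs you flag. After the same wall bookkeeping (establishing $wm\neq m$ and that $C,wC$ lie on the $\alpha\cap\beta$ side), the paper never computes an angle or invokes link structure. Instead it shows directly that for any $x'\in{]z,y]}$ sufficiently close to $z$, the geodesic $[x',wx']$ meets both $\sigma:=\Int(C\cap D)\subseteq m$ and $w\sigma\subseteq wm$, say at points $a$ and $b$. Setting $x'':=r_m(x')\in{]z,x]}$ and using $a\in m$, $b\in wm$, one gets
\[
\dist_w(x'')\le \dist(x'',a)+\dist(a,b)+\dist(b,wx'')=\dist(x',a)+\dist(a,b)+\dist(b,wx')=\dist_w(x'),
\]
with strict inequality since $[x'',a]\cup[a,b]$ is not geodesic by Lemma~\ref{lemma:convexity_walls}(2). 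Thus $\dist_w(r_m(x'))<\dist_w(x')$ for all $x'$ close to $z$, and convexity of $\dist_w$ finishes. This uses only the triangle inequality and Lemma~\ref{lemma:convexity_walls}.

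Your approach instead computes the one–sided derivative of $\dist_w$ at $z$ via the first–variation formula for $t\mapsto\dist(\gamma(t),w\gamma(t))$ and then shows each cosine term is negative using the suspension structure $\Sigma_zX\cong\Sigma_zm\ast S^0$ with poles $\pm u$. Both facts are true but neither is quite as ``standard'' as you suggest. The suspension structure holds because $z$ lies in the relative interior of a Coxeter cell $\tau$ with $r_m\in\Stab(\tau)$ acting as a Euclidean reflection on $\tau$ (giving a one–dimensional $(-1)$–eigenspace) and \emph{trivially} on $\mathrm{Lk}(\tau,X)$; the latter needs the observation that for spherical $I\subseteq J$ the group $W_I$ acts trivially on $V_I^{\perp}\subseteq V_J$ in the reflection representation. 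That $u$ is then a pole follows from $r_m(u)=-u$ and $\angle(u,-u)=\pi$. The two–point first–variation equality in $\mathrm{CAT}(0)$ is also correct, but requires more than the single–point version in \cite{BHCAT0} (one uses joint convexity of $(s,t)\mapsto\dist(\gamma(s),w\gamma(t))$ together with continuity of Alexandrov angles). You rightly identify these as the delicate steps; the paper's reflection argument simply sidesteps them.

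What each approach buys: the paper's argument is shorter, self–contained, and uses nothing beyond Lemma~\ref{lemma:convexity_walls} and convexity of $\dist_w$. Your approach is more conceptual (it explains \emph{why} the derivative is positive in terms of the local geometry at $z$ and $wz$), and would generalise to other $\mathrm{CAT}(0)$ complexes in which walls have product neighbourhoods; but in the present setting it imports machinery that is heavier than what the statement requires.
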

\begin{proof}
Let $\sigma:=\Int(C\cap D)$. As $\Int(C)\cup\Int(D)\cup\sigma$ is convex (i.e. it is an intersection of open half-spaces), $[x,y]$ intersects $\sigma$, at $z$. Note that
\begin{equation}
wm\neq m,
\end{equation}
for otherwise $r_m=wr_mw\inv$ and hence $\dc(D,wD)=\dc(C,r_mwr_m(C))=\dc(C,wC)$, a contradiction. In particular, $]z,wz[$ does not intersect $\sigma$, for otherwise $[z,wz]$ would be entirely contained in $m$ by Lemma~\ref{lemma:convexity_walls}(1), contradicting the fact that $wm$ is the only wall containing $wz$. Thus $]z,wz[$ intersects either $\Int(C)$ or $\Int(D)$. But the latter case is not possible, for in that case $D$ would lie in the same half-space delimited by $m$ as $wz$ and hence (since $wz\notin m$) as $wD$, contradicting the fact that $m$ separates $D$ from $wD$ (because $\dc(C,wC)<\dc(D,wD)$ by assumption). Thus $]z,wz[$ intersects $\Int(C)$.

In particular, if we move a bit $z$ along $[x,y]$ towards $y$, say at $x'\in\Int(D)$, the geodesic $[x',wx']$ still passes through $\Int(C)$, and hence intersects $\sigma$, say at $a$. For the same reasons, up to choosing $x'$ closer to $z$ (and arguing with $(w\inv,wC,wm)$ instead of $(w,C,m)$), the geodesic $[x',wx']$ also intersects $w\sigma$, say at $b$ (see Figure~\ref{figure:dvsdchvsdw}). Set $x'':=r_m(x')$. Then
\begin{align*}
\dist_w(x'')&=\dist(x'',wx'')\leq\dist(x'',a)+\dist(a,b)+\dist(b,wx'')\\
&=\dist(x',a)+\dist(a,b)+\dist(b,wx')=\dist(x',wx')=\dist_w(x').
\end{align*}
In fact, the above inequality is even strict, since the concatenation of $[x'',a]$ and $[a,b]$ cannot be a geodesic by Lemma~\ref{lemma:convexity_walls}(2). Thus $\dist_w(x'')<\dist_w(x')$. Since the above argument can be repeated if we replace $x'$ by any point on $[x',z[$, the convexity of $\dist_w$ then implies that $\dist_w$ is strictly increasing on $[z,y]$, as desired.
\end{proof}

\begin{figure}

  \centering
  \includegraphics[trim = 7mm 3mm 1mm 14mm, clip, width=13cm]{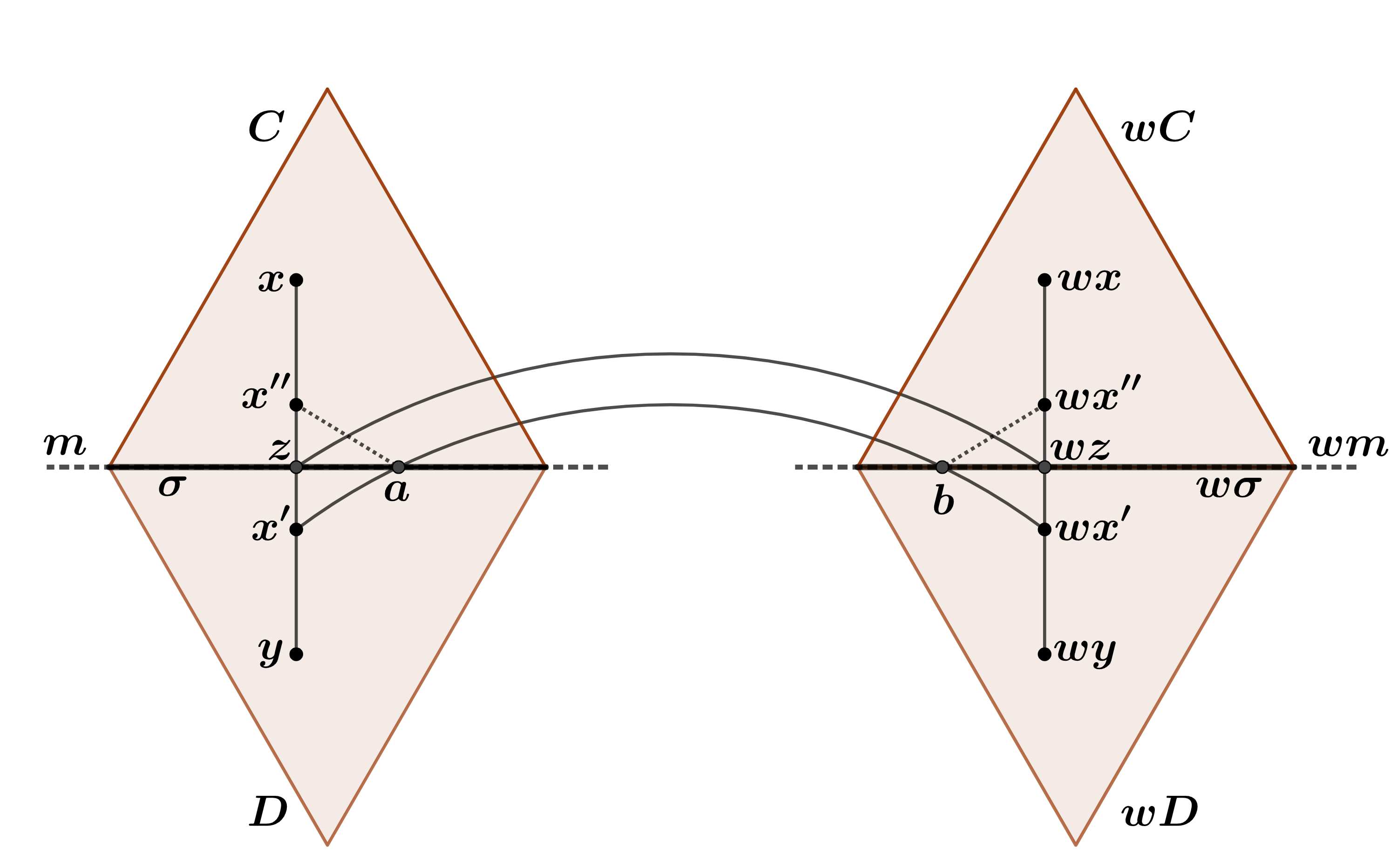}
  \captionof{figure}{Proposition~\ref{prop:dvsdchvsdw}}
  \label{figure:dvsdchvsdw}

\end{figure}

\begin{theorem}\label{thm:nobadallbad}
Assume that every proper parabolic subgroup of $W$ is finite. Let $w\in W$. Suppose that there exists a $w$-bad chamber $C\in\Ch(X)$. Then all the conjugates of $w$ have the same length.
\end{theorem}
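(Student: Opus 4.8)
\emph{Plan.} The plan is to prove that the function $n\co\Ch(X)\to\NN$, $n(C):=\dc(C,wC)=\ell(\pi_w(C))$, is constant; as $\{n(C)\mid C\in\Ch(X)\}$ is exactly the set of lengths of conjugates of $w$, this is the assertion. I would first record two elementary consequences of a chamber $C$ being $w$-bad, both obtained from Lemma~\ref{lemma:geometrictranslations}(1) applied to length-one galleries out of $C$ (along which ``$w$-non-decreasing'' means ``$n$ non-decreasing'' and ``$w$-flat'' means ``$n$ constant''): (i) every chamber $D$ adjacent to $C$ satisfies $n(D)\leq n(C)$; and (ii) if moreover $n(D)=n(C)$, then $D$ is again $w$-bad (concatenate the $w$-flat gallery $(C,D)$ with a putative $w$-non-decreasing, non-$w$-flat gallery issued from $D$). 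Call a wall $m$ of $C$ \emph{descending} if $n(r_m(C))<n(C)$; thus a $w$-bad chamber has only descending and ``flat'' walls (those with $n(r_m(C))=n(C)$), and every flat wall leads to a $w$-bad neighbour of the same $n$-value.

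Next, fixing a $w$-bad chamber $C$ and putting $N:=n(C)$, I would let $\mathcal B\subseteq\Ch(X)$ be the set of $w$-bad chambers reachable from $C$ by a $w$-flat gallery of $w$-bad chambers; by (i)--(ii), $\mathcal B$ consists of $w$-bad chambers of $n$-value $N$ and is closed under passing to flat neighbours, and the goal becomes $\mathcal B=\Ch(X)$. Since $W$ is finitely generated, there are only finitely many conjugates of $w$ of length $N$, and since $\dist_w(x_{C'})=\dist(x_0,\pi_w(C')\,x_0)$ by $W$-equivariance (with $x_0$ the barycentre of $C_0$), the set $\{\dist_w(x_{C'})\mid C'\in\mathcal B\}$ is finite; let $M^\ast$ be its maximum, attained at some $C^\ast\in\mathcal B$.

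The heart of the argument --- and, I expect, the only delicate point --- is the following ``pumping'' statement, combining Proposition~\ref{prop:dvsdchvsdw} with the convexity of $\dist_w$: \emph{if a $w$-bad chamber $C'$ with $n(C')=N$ has a descending wall, then some flat wall $m$ of $C'$ has $\dist_w(r_m(x_{C'}))>\dist_w(x_{C'})$} (and then $r_m(C')\in\mathcal B$). Indeed, enumerate the walls $m_1,\dots,m_k$ of $C'$, write $x':=x_{C'}$ and $z_i:=[x',r_{m_i}(x')]\cap m_i$. If $m_i$ is descending, Proposition~\ref{prop:dvsdchvsdw}, applied to the chamber $r_{m_i}(C')$, its wall $m_i$, and its strictly longer neighbour $C'$, shows $\dist_w$ is strictly decreasing on $[x',z_i]$, so $\dist_w<\dist_w(x')$ on $\,]x',z_i]$. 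If $m_i$ is flat and $\dist_w$ is \emph{not} strictly increasing on $[x',r_{m_i}(x')]$, then --- since a convex function of one real variable that is not strictly increasing on an interval takes, arbitrarily close to the left endpoint, a value at most its value there --- there are points of $\,]x',r_{m_i}(x')]$ arbitrarily close to $x'$ with $\dist_w\leq\dist_w(x')$. Were all flat walls of this latter, benign type, one could choose on each of the $k$ segments, and close enough to $x'$, a point $\xi_i$ with $\dist_w(\xi_i)\leq\dist_w(x')$ (strictly for a descending $m_i$); by Lemma~\ref{lemma:extgeod}(1) the convex hull $P$ of $\{\xi_1,\dots,\xi_k\}$ is a neighbourhood of $x'$, on which convexity forces $\dist_w\leq\dist_w(x')$; since this maximum is attained at the interior point $x'$, convexity forces $\dist_w\equiv\dist_w(x')$ on $P$, contradicting $\dist_w(\xi_i)<\dist_w(x')$ for the descending $m_i$. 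Hence some flat wall $m$ fails to be benign, i.e.\ $\dist_w$ is strictly increasing on $[x',r_m(x')]$, giving $\dist_w(r_m(x'))>\dist_w(x')$.

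To conclude: applying the pumping statement to $C^\ast$ would produce a chamber of $\mathcal B$ whose barycentre has $\dist_w$-value $>M^\ast$, which is impossible; so $C^\ast$ has no descending wall, and by the same maximality no flat wall of $C^\ast$ can be non-benign (its flat neighbour lies in $\mathcal B$). Exactly as above, this yields points $\xi_i$ near $x_{C^\ast}$ on each of the $k$ segments with $\dist_w(\xi_i)\leq M^\ast$, so by Lemma~\ref{lemma:extgeod}(1) and convexity $\dist_w\equiv M^\ast$ on a neighbourhood of $x_{C^\ast}$; convexity together with geodesic extendability (Lemma~\ref{lemma:extgeod}(2)) then upgrades this to $\dist_w\geq M^\ast$ everywhere on $X$, so $\dist_w(x_{C'})=M^\ast$ for every $C'\in\mathcal B$. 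Finally, if $\mathcal B\neq\Ch(X)$, connectedness of the chamber graph gives a chamber $C^\partial\in\mathcal B$ adjacent to a chamber outside $\mathcal B$; by (i)--(ii) that wall is descending, so Proposition~\ref{prop:dvsdchvsdw} yields a point near $x_{C^\partial}$ with $\dist_w<\dist_w(x_{C^\partial})=M^\ast$, contradicting $\dist_w\geq M^\ast$. Therefore $\mathcal B=\Ch(X)$, whence $n\equiv N$ and all conjugates of $w$ have the same length.
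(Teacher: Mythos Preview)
Your proof is correct and follows the same strategy as the paper: pick a $w$-bad chamber maximising $\dist_w$ at its barycentre within $C_=$ (your $\mathcal B$), use Proposition~\ref{prop:dvsdchvsdw} together with convexity of $\dist_w$ and Lemma~\ref{lemma:extgeod}(1) to rule out any wall along which $\dist_w$ strictly increases towards $x_{C^\ast}$, hence in particular any descending wall, and then propagate. The only difference is in the endgame: the paper observes that each neighbour of the maximal chamber is again in $C_=$ with the same $\dist_w$-value and simply iterates chamber by chamber, whereas you take a slightly longer (but equally valid) detour through a global lower bound $\dist_w\geq M^\ast$ via Lemma~\ref{lemma:extgeod}(2) and a contradiction at the boundary of $\mathcal B$.
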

\begin{proof}
Let $C$ be a $w$-bad chamber and let $C_{=}$ denote the set of chambers on a $w$-flat gallery from $C$ (in particular, all chambers of $C_=$ are $w$-bad). For each chamber $D$, write $x_D\in\Int(D)$ for its barycentre. 

Note that $\{\dist_w(x_D) \ | \ D\in C_{=}\}$ is finite. Indeed, since $\ell(\pi_w(D))=\ell(\pi_w(C))$ for all $D\in C_=$, the set $\{\pi_w(D) \ | \ D\in C_{=}\}$ is finite. On the other hand, for two chambers $D_1,D_2$, we have $\pi_w(D_1)=\pi_w(D_2)$ if and only if $D_1,D_2$ are in the same $\ZZZ_W(w)$-orbit (where $\ZZZ_W(w)$ denotes the centraliser of $w$ in $W$). Hence $C_=$ is covered by finitely many $\ZZZ_W(w)$-orbits of chambers. As $\dist_w$ is constant on $\ZZZ_W(w)$-orbits, the claim follows.

Up to modifying $C$ inside $C_=$, we may now assume that $\dist_w(x_C)$ is maximal among the chambers of $C_=$. Set for short $x:=x_C$. Let $m_1,\dots,m_k$ be the walls of $C$, and set $D_i:=r_{m_i}(C)$ for each $i$. By assumption, for each $i$, either $D_i\in C_{=}$, or $\dc(C,wC)>\dc(D_i,wD_i)$. In the second case, Proposition~\ref{prop:dvsdchvsdw} implies that $\dist_w$ is strictly increasing on $[x_i,x]$, where $x_i:=m_i\cap [x_{D_i},x]$. In the first case, we have $\dist_w(x_{D_i})\leq \dist_w(x)$ by the maximality assumption on $C$. Since $\dist_w$ is convex, we then either have that $\dist_w$ is strictly increasing on $[x_i,x]$ for some $x_i\in [x_{D_i},x[$, or else $\dist_w$ is constant on $[x_{D_i},x]$ (in which case we set $x_i:=x_{D_i}$). 

We have thus defined for each $i=1,\dots,k$ a point $x_i\in [x_{D_i},x[$ such that either $\dist_w$ is strictly increasing on $[x_i,x]$, or $x_i=x_{D_i}$ and $\dist_w$ is constant on $[x_i,x]$ and $D_i\in C_=$. We claim that only the second possibility can occur. Indeed, suppose for a contradiction that $\dist_w$ is strictly increasing on $[x_j,x]$ for some $j$. By  Lemma~\ref{lemma:extgeod}, we can extend $[x_j,x]$ to a geodesic segment $[x_j,y]$ (with $[x_j,x]\subsetneq [x_j,y]$, so that $\dist_w(x)<\dist_w(y)$ by convexity of $\dist_w$) such that $y$ lies in the convex hull of $\{x_1,\dots,x_k\}$. Since $\dist_w(x_i)\leq\dist_w(x)$ for all $i$, the convexity of $\dist_w$ implies that $\dist_w(y)\leq\dist_w(x)$, a contradiction.

Thus, for any chamber $D$ adjacent to $C$, we have $D\in C_=$ and $\dist_w(x_D)=\dist_w(x_C)$ (in particular, $D$ also satisfies the maximality assumption satisfied by $C$). We may thus repeat the above argument with $C$ replaced by any of its adjacent chambers, and hence also inductively to conclude that $C_==\Ch(X)$. In particular, $\dc(D,wD)=\ell(\pi_w(D))$ is independent of $D\in\Ch(X)$, as desired.
\end{proof}

The following lemma is well-known; as we could not find it explicitely stated in the literature, we include here a proof for the benefit of the reader.
\begin{lemma}\label{lemma:ICC}
Suppose $W$ is infinite irreducible, and let $w\in W\setminus\{1\}$. Then the conjugacy class of $w$ is finite if and only if $W$ is of affine type and $w$ is a translation.
\end{lemma}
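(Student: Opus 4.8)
The plan is to prove both implications separately, with the harder direction being ``finite conjugacy class implies affine type and translation.''

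For the easy direction, suppose $W$ is of affine type and $w$ is a translation. Then $W = W_0 \ltimes T$ where $T$ is the translation lattice (a free abelian normal subgroup of finite rank $n$) and $W_0$ is the finite spherical Weyl group. Conjugation by an element $(v,t) \in W$ sends the translation $\tau \in T$ to $v\tau v^{-1}$, which depends only on the image of $v$ in $W_0$; since $W_0$ is finite, the conjugacy class of $\tau$ is contained in the finite set $\{v\tau v^{-1} : v \in W_0\}$, hence is finite. (One should note that every nontrivial translation genuinely has a nontrivial — but finite — conjugacy class, since $W_0$ acts faithfully on $T\otimes\RR$, so if $\tau \neq 1$ then some $v$ moves it; this is consistent with the claim, which only asserts finiteness.)

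For the hard direction, suppose the conjugacy class of $w \neq 1$ is finite; I want to conclude that $W$ is affine and $w$ is a translation. First I would rule out the case where $W$ has an infinite proper parabolic subgroup: if not all proper subsets of $S$ are spherical, then since $W$ is irreducible, Proposition~\ref{prop:U+Swinfinitenoncompacthyp} applies and gives that $U^+_S(w)$ is infinite; but $U^+_S(w)$ consists of conjugates of $w$, so the conjugacy class is infinite, a contradiction. Hence we may assume every proper parabolic subgroup of $W$ is finite, so $W$ is of affine or compact hyperbolic type and the Davis complex $X$ has extendable geodesics (Lemma~\ref{lemma:extgeod}). Now I invoke the geometric machinery: if no chamber of $X$ were $w$-bad, then Lemma~\ref{lemma:geometrictranslations}(2) would give $U^+_S(w)$ infinite, again contradicting finiteness of the conjugacy class. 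So there is a $w$-bad chamber, and Theorem~\ref{thm:nobadallbad} forces all conjugates of $w$ to have the same length. In particular $\dc(D, wD) = \ell(\pi_w(D))$ is constant over all chambers $D$, which says precisely that the displacement function of $w$ on the combinatorial level — equivalently the translation length of $w$ acting on $X$ — is ``uniform'': $w$ acts on $X$ with $\dist_w$ attaining the same value at every barycentre.

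It remains to extract from this that $W$ is affine and $w$ is a translation. The key point I would use is that $X$ is CAT(0) and $\dist_w$ is convex; constancy of $\dist_w$ on the (cocompact, $0$-dimensionally spanning) set of barycentres, combined with convexity, should force $\dist_w$ to be globally constant on $X$, so $w$ is a Clifford translation of the CAT(0) space $X$. By the CAT(0) analogue of the classical theorem on Clifford translations (see e.g. \cite[II.6.15]{BHCAT0}), the existence of a nontrivial Clifford translation forces $X$ to split off a Euclidean line as a metric factor, and iterating, the minimal set / the structure of $W$ acting on $X$ becomes Euclidean; since $W$ acts cocompactly and $W$ is irreducible, this forces $X$ to be flat, i.e.\ $W$ of affine type, and then $w$, acting as a Clifford translation of Euclidean space commuting appropriately with $W_0$, must lie in the translation lattice $T$. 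The main obstacle I anticipate is exactly this last step: carefully passing from ``equal length of all conjugates'' (a purely combinatorial/metric statement about barycentres) to ``$w$ is a Clifford translation of $X$'' and then to ``$W$ is affine,'' since one must rule out the compact hyperbolic case (where no nontrivial element can be a Clifford translation because $\delta$-hyperbolic CAT(0) spaces admit none) and handle the bookkeeping of how $w$ interacts with the affine structure. I would expect the authors to package this via a direct argument that in the compact hyperbolic case $\dist_w$ is strictly convex along some geodesic line (using extendability of geodesics and the fact that walls genuinely separate), contradicting constancy, thereby isolating the affine case, and then quoting the standard description $W = W_0 \ltimes T$ together with the fact that the elements acting with constant displacement are exactly the translations.
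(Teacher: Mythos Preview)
Your proposal is essentially correct but takes a route quite different from the paper's own proof.

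\medskip

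\textbf{How the paper argues.} The paper does \emph{not} invoke its own Proposition~\ref{prop:U+Swinfinitenoncompacthyp} or Theorem~\ref{thm:nobadallbad} here; instead it assembles the hard direction from external results. First, finiteness of the conjugacy class forces all conjugates of $w$ to have equal length by \cite[Theorem~1.3]{Badupward}; hence $w$ has minimal length in its class and $\Pc(w)$ is standard (\cite[Proposition~4.2]{CF10}), and a short support argument gives $\Pc(w)=W$. For $W$ not of affine type, the centraliser $\ZZZ_W(w)$ has finite index, so Krammer's \cite[Corollary~6.3.10]{Kra09} forces $\langle w\rangle$ to have finite index in $W$, making $W$ virtually abelian --- a contradiction. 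For $W$ affine, the paper writes $w=ut$ with $u\in W_x$, $t\in T$, and if $u\neq 1$ uses the already-established non-affine case (applied to $u$) together with a pigeonhole argument on the $W_x$-component of conjugators to show that suitable translations $t_n$ conjugate $w$ to elements of unbounded length.

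\medskip

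\textbf{How your approach compares.} Your idea is to stay inside the paper's own CAT(0) framework: reduce via Proposition~\ref{prop:U+Swinfinitenoncompacthyp} to the case where every proper parabolic is finite, then feed a $w$-bad chamber into Theorem~\ref{thm:nobadallbad}, and finally argue that $w$ must be a Clifford translation of $X$, which forces a Euclidean factor and hence (ruling out compact hyperbolic type by Gromov hyperbolicity) affine type with $w\in T$. This is a perfectly viable strategy and arguably more in the spirit of the paper than the proof the authors actually give. One imprecision worth flagging: Theorem~\ref{thm:nobadallbad} as stated only yields that $\dc(D,wD)$ is constant, not that $\dist_w(x_D)$ is constant; the two are not the same. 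However, you do not actually need constancy: finiteness of the conjugacy class already gives that $\{\pi_w(D):D\in\Ch(X)\}$ is finite, so $\dist_w$ is bounded on barycentres, hence (being $2$-Lipschitz) bounded on $X$; a bounded convex function on a complete CAT(0) space with extendable geodesics (Lemma~\ref{lemma:extgeod}(2)) is constant on every line, hence globally constant. So your Clifford-translation step goes through even without appealing to Theorem~\ref{thm:nobadallbad}. The remaining point --- that a nontrivial Clifford translation cannot exist on the Davis complex of a compact hyperbolic Coxeter group --- is exactly what you flag as the obstacle, and is handled by the splitting theorem \cite[II.6.15]{BHCAT0} together with Gromov hyperbolicity of $X$ in that case.

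\medskip

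\textbf{What each buys.} The paper's route is logically independent of Sections~\ref{section:CGWAIPPS}--\ref{section:CGWNIPPS}, so Lemma~\ref{lemma:ICC} can be cited as a standalone fact; the price is reliance on three outside references. Your route is self-contained modulo standard CAT(0) facts and gives a cleaner geometric picture, but leans on the paper's heavier machinery and on the Clifford splitting theorem.
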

\begin{proof}
The implication $\Leftarrow$ is clear. For the forward implication, suppose that the conjugacy class of $w$ is finite. Then in fact all conjugates of $w$ have the same length (see \cite[Theorem~1.3]{Badupward}). In particular, $w$ has minimal length in its conjugacy class, and hence its parabolic closure $\Pc(w)$ (the smallest parabolic subgroup containing $w$) is standard (see e.g. \cite[Proposition~4.2]{CF10}). This implies that $\Pc(w)=W$, for if $w\in W_I$ for some $I\subsetneq S$ then, as in the proof of Proposition~\ref{prop:U+Swinfinitenoncompacthyp}, there is some $s\in S\setminus I$ such that $\ell(w)<\ell(sws)$.

Assume first that $W$ is not of affine type. By assumption, $\ZZZ_W(w)$ has finite index in $W$. Hence \cite[Corollary~6.3.10]{Kra09} implies that the subgroup $\langle w\rangle$ generated by $w$ has finite index in $W$, so that $W$ is virtually abelian, a contradiction. 

Assume now that $W$ is of affine type, so that $X$ is a Euclidean space. In that case, $W$ decomposes as a semidirect product $W=W_{x}\ltimes T$, where $T$ is a group of translations of $X$ and $W_x$ is a finite parabolic subgroup (the fixer of a \emph{special} vertex $x\in X$) --- see e.g. \cite[\S 10.1.6]{BrownAbr}. Write $w=ut$ accordingly, with $u\in W_x$ and $t\in T$. Suppose for a contradiction that $w$ is not a translation, i.e. $u\neq 1$. The first paragraph implies that the conjugacy class of $u$ in $W$ is infinite. Let $(v_n)_{n\in\NN}\subseteq W$ be such that $\ell(v_n\inv uv_n)\stackrel{n\to\infty}{\to}\infty$. Write $v_n=u_nt_n$ with $u_n\in W_x$ and $t_n\in T$. Up to extracting a subsequence, we may assume that $(u_n)_{n\in\NN}$ is constant, say $u_n=u_0$ for all $n$. Set $u':=u_0\inv uu_0\in W_x$,  $t':=u_0\inv tu_0\in T$ and $w':=u't'=u_0\inv wu_0$. As $\ell(t_n\inv u't_n)\stackrel{n\to\infty}{\to}\infty$ and $t_n\inv w't_n=t_n\inv u't_n\cdot t'$, we also have $\ell(t_n\inv w't_n)\stackrel{n\to\infty}{\to}\infty$, so that $w'$ (and hence $w$) has an infinite conjugacy class, a contradiction.
\end{proof}

\begin{corollary}\label{cor:U+Swinfinitecompacthyp}
Assume that every proper parabolic subgroup of $W$ is finite. Let $w\in W\setminus\{1\}$, and assume $w$ is not a translation if $W$ is of affine type. Then $U^+_S(w)$ is infinite. 
\end{corollary}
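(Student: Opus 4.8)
The plan is to combine Theorem~\ref{thm:nobadallbad}, Lemma~\ref{lemma:geometrictranslations}(2) and Lemma~\ref{lemma:ICC} into a short contrapositive argument. First I would suppose, for a contradiction, that $U^+_S(w')$ is finite for some conjugate $w'$ of $w$; since $w$ is not a translation when $W$ is affine, and conjugation preserves this property (translations form a normal subgroup), the hypotheses on $w$ pass to $w'$, so it is harmless to rename $w'$ as $w$ and assume $U^+_S(w)$ itself is finite. By Lemma~\ref{lemma:geometrictranslations}(2), the finiteness of $U^+_S(w)$ for this conjugate forces the existence of a $w$-bad chamber $C\in\Ch(X)$: indeed, taking $w$ of maximal length in $U^+_S(w)$ and a chamber $C$ with $\pi_w(C)=w$, part~(1) of that lemma shows every $w$-non-decreasing gallery from $C$ is $w$-flat.

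Next, since every proper parabolic subgroup of $W$ is finite, Theorem~\ref{thm:nobadallbad} applies and yields that all conjugates of $w$ have the same length. In particular the conjugacy class of $w$ maps with finite fibres (each fibre a $\ZZZ_W(w)$-orbit, but more simply: only finitely many elements of $W$ have a given length $\ell(w)$) to the finite set of elements of length $\ell(w)$, hence the conjugacy class of $w$ is finite. By Lemma~\ref{lemma:ICC}, this can only happen if $W$ is of affine type and $w$ is a translation --- contradicting the standing hypothesis on $w$. Therefore $U^+_S(w')$ is infinite for every conjugate $w'$ of $w$, and in particular for $w$ itself.

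The only point requiring a little care is the passage from "$U^+_S(w')$ finite for \emph{some} conjugate" to producing a bad chamber and then invoking Theorem~\ref{thm:nobadallbad}: one must check that the maximality-of-length normalisation inside $U^+_S(w')$ is compatible with $\pi_w$, which is exactly the content of Lemma~\ref{lemma:geometrictranslations}. Everything else is bookkeeping: that the class of translations is normal (so the exclusion of translations is conjugation-invariant), and that finitely many elements share a fixed length in a finitely generated Coxeter group. I do not expect a genuine obstacle here; the substance of the corollary has already been absorbed into Theorem~\ref{thm:nobadallbad} and Lemma~\ref{lemma:ICC}.
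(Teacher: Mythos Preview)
Your proposal is correct and follows essentially the same route as the paper: both combine Lemma~\ref{lemma:ICC}, Theorem~\ref{thm:nobadallbad}, and Lemma~\ref{lemma:geometrictranslations}(2), the only difference being that the paper argues directly (infinite conjugacy class $\Rightarrow$ no $w$-bad chamber $\Rightarrow$ $U^+_S(w)$ infinite) while you phrase the same chain contrapositively. The extra care you take about conjugates and the normality of translations is harmless but unnecessary, since the statement only asks about $U^+_S(w)$ itself.
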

\begin{proof}
Lemma~\ref{lemma:ICC} implies that the conjugacy class of $w$ is infinite. Theorem~\ref{thm:nobadallbad} then implies that no chamber of $X$ is $w$-bad. The conclusion thus follows from Lemma~\ref{lemma:geometrictranslations}(2).
\end{proof}


\section{The centre of Hecke algebras}
\label{sec:center}

We follow Garrett's presentation of generic Hecke algebras from \cite[Chapter 6.1]{garrett97}.  Given a Coxeter system $(W,S)$ and a commutative ring $R$, let $(a_s, b_s)_{s \in S}$ be a family of elements from $R$ subject to the relation that $(a_s, b_s) = (a_t,b_t)$ if $s$ and $t$ are conjugate in $W$.  Such tuples are termed \textbf{deformation parameters}.  Denote by $\HHH = \HHH(W, S, (a_s, b_s)_{s \in S})$ the \textbf{generic Hecke algebra} which is the free $R$-module with basis $(T_w)_{w \in W}$ equipped with the unique $R$-algebra structure satisfying
\begin{align*}
  T_s T_w & = T_{sw}  & \text{ if } \ell(sw) > \ell(w) \\
  T_s T_w & = a_s T_w + b_s T_{sw} & \text{ if } \ell(sw) < \ell(w) \\
  T_w T_s & = T_{ws}  & \text{ if } \ell(ws) > \ell(w) \\
  T_w T_s & = a_s T_w + b_s T_{ws} & \text{ if } \ell(ws) < \ell(w) &
                                   \text{.}
\end{align*}

\begin{example}
  \label{ex:iwahori-hecke-algebra}
  The most common specialisation of generic Hecke algebras are Iwahori-Hecke algebras, either over the complex numbers or the ring $\ZZ[v,v^{-1}]$. The former is the basis of operator algebraic considerations in \cite{caspersklisselarsen19,raumskalski20,klisse2021} while the latter is prominent in the study of finite groups of Lie type and $p$-adic reductive groups \cite{KL79,Lus03}.

  More precisely, if $\mathbf{q} = (q_s)_{s \in S} \in \RR_{> 0}^{S}$ is a multiparameter satisfying $q_s = q_t$ whenever $s$ and $t$ are conjugate in $W$, then $\CC_{\mathbf{q}}[W]$ is the generic Hecke algebra over $\CC$ with parameters $(q_s^{1/2} - q_s^{-1/2}, 1)_{s \in S}$, that is
  \begin{align*}
    T_s T_w & = T_{sw}  & \text{ if } \ell(sw) > \ell(w) \\
    T_s T_w & = (q_s^{1/2} - q_s^{-1/2}) T_w + T_{sw} & \text{ if } \ell(sw) < \ell(w) &
    \text{.}
  \end{align*}
  The algebras $\CC_{\mathbf{q}}[W]$ naturally appear in the study of representation theory of groups acting on buildings and are the basis for the construction of Hecke operator algebras once equipped with a suitable *-structure.

  The Iwahori-Hecke algebras considered over $\ZZ[v,v^{-1}]$ are associated with the Coxeter system $(W,S)$ equipped with an additional {\bf length function} $L: W \to \ZZ$ satisfying $L(w_1w_2) = L(w_1) + L(w_2)$ whenever $\ell(w_1w_2) = \ell(w_1) + \ell(w_2)$.  The Hecke algebra over $\ZZ[v,v^{-1}]$ with parameters $(v^{L(s)},1)_{s\in S}$ is studied in representation theory of finite and reductive groups.
\end{example}

Let $\HHH$ be a generic Hecke algebra.  For $x \in \HHH$ we write $x = \sum_{w \in W} x_w T_w$ for its expansion in the standard basis of $\HHH$. In order to describe the multiplication by Hecke operators on the coefficients $(x_w)_w$, we consider the functionals $\delta_w \in \Hom_R(\HHH, R)$ satisfying $\delta_w(T_v) = \delta_{v,w}$. 

For $s \in S$ we denote by $\lambda_s, \rho_s \in \End_R(\HHH)$ the endomorphisms given by left and right multiplication with $T_s$, respectively.  We denote by $\lambda_s', \rho_s' \in \End_R(\Hom(\HHH, R))$ the dual endomorphisms satisfying
\begin{gather*}
  (\lambda_s' \varphi)(x)  = \varphi(T_sx)
  \quad \text{and} \quad 
  (\rho_s' \varphi)(x)  = \varphi(xT_s)
             \quad \text{for all $\varphi\in\Hom(\HHH, R)$ and $x\in \HHH$.}
\end{gather*}

\begin{lemma}
  \label{lem:dual-action}
  The following formulas describe the action of $\lambda_s'$ and $\rho_s'$ on the dual basis $(\delta_w)_{w \in W}$.
  \begin{align*}
    \lambda_s' \delta_w
    & =
    \begin{cases}
      b_s \delta_{sw} & \text{if } \ell(sw) > \ell(w) \text{,} \\
      a_s \delta_w + \delta_{sw} & \text{if } \ell(sw) < \ell(w) \text{.}
    \end{cases} \\
    \rho_s' \delta_w
    & =
    \begin{cases}
      b_s \delta_{ws} & \text{if } \ell(ws) > \ell(w) \text{,} \\
      a_s \delta_w + \delta_{ws} & \text{if } \ell(ws) < \ell(w) \text{.}
    \end{cases}
  \end{align*}
\end{lemma}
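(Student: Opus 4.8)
The plan is to compute $(\lambda_s' \delta_w)(T_v)$ for an arbitrary basis element $T_v$ directly from the definitions and then read off the expansion in the dual basis. By definition $(\lambda_s'\delta_w)(T_v) = \delta_w(T_sT_v)$, so I first need to know $T_sT_v$ in the standard basis, which is exactly what the defining relations of $\HHH$ provide: if $\ell(sv)>\ell(v)$ then $T_sT_v = T_{sv}$, and if $\ell(sv)<\ell(v)$ then $T_sT_v = a_sT_v + b_sT_{sv}$. Applying $\delta_w$ to these expressions gives $(\lambda_s'\delta_w)(T_v) = \delta_{sv,w}$ in the first case, and $(\lambda_s'\delta_w)(T_v) = a_s\delta_{v,w} + b_s\delta_{sv,w}$ in the second.

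The only genuine bookkeeping is translating ``$\delta_{sv,w}=1$'' into a statement about $v$: one has $sv=w$ iff $v=sw$, and the length comparison $\ell(sv)$ versus $\ell(v)$ becomes $\ell(w)$ versus $\ell(sw)$ after this substitution. So I split into the two cases according to whether $\ell(sw)>\ell(w)$ or $\ell(sw)<\ell(w)$ (these are the only possibilities since $sw\neq w$). Suppose $\ell(sw)>\ell(w)$. Then for the value $\delta_w(T_sT_v)$ to be nonzero we need either $w=sv$ with $\ell(sv)>\ell(v)$, or $w\in\{v,sv\}$ with $\ell(sv)<\ell(v)$. The first possibility forces $v=sw$ and $\ell(w)>\ell(sw)$, contradicting our case; and in the second, $w=v$ forces $\ell(sw)<\ell(w)$, again a contradiction, while $w=sv$ forces $v=sw$ and $\ell(sv)=\ell(w)<\ell(sw)=\ell(v)$, consistent, contributing the coefficient $b_s$. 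Hence $\lambda_s'\delta_w = b_s\delta_{sw}$ in this case. Now suppose $\ell(sw)<\ell(w)$. Then $\delta_w(T_sT_v)\neq 0$ requires either $w=sv$ with $\ell(sv)>\ell(v)$ — giving $v=sw$, $\ell(w)>\ell(sw)=\ell(v)$, consistent, contributing coefficient $1$ at $v=sw$ — or $w=v$ with $\ell(sv)<\ell(v)$, i.e. $\ell(sw)<\ell(w)$, consistent, contributing $a_s$ at $v=w$; the possibility $w=sv$ with $\ell(sv)<\ell(v)$ would force $\ell(w)<\ell(sw)$, a contradiction. Hence $\lambda_s'\delta_w = a_s\delta_w + \delta_{sw}$ in this case, as claimed.

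The formula for $\rho_s'$ is proved in exactly the same way, using the two defining relations for $T_vT_s$ in place of those for $T_sT_v$ and replacing ``$sv$'' by ``$vs$'' and ``$sw$'' by ``$ws$'' throughout; I would simply remark that the argument is symmetric rather than repeating it. There is no real obstacle here: the lemma is a routine dualisation of the defining relations, and the only thing to be careful about is checking, in each of the four sub-cases, that the length inequality imposed by the relation is compatible with (or contradicted by) the length hypothesis of the case, so that exactly the stated terms survive.
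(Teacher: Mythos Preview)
Your proposal is correct and follows essentially the same approach as the paper: both compute $(\lambda_s'\delta_w)(T_v)=\delta_w(T_sT_v)$ directly from the defining relations and read off the coefficients. The only cosmetic difference is that the paper organises the case split according to $\ell(sv)$ versus $\ell(v)$ and then summarises, whereas you split first on $\ell(sw)$ versus $\ell(w)$; the content is identical.
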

\begin{proof}
    Let $v,w \in W$ and $s \in S$.  Then
  \begin{align*}
    \lambda_s' \delta_w(T_v)
    & = \delta_w(T_s T_v) \\
    & =
      \begin{cases}
        \delta_{w}(T_{sv}) & \text{if } \ell(sv) > \ell(v) \\
        \delta_{w}(a_s T_v + b_s T_{sv}) & \text{if } \ell(sv) < \ell(v)
      \end{cases} \\
    & =
      \begin{cases}
        1 & \text{if } \ell(sv) > \ell(v) \text{ and } sv = w \\
        a_s  & \text{if } \ell(sv) < \ell(v) \text{ and } v = w \\
        b_s  & \text{if } \ell(sv) < \ell(v) \text{ and } sv = w \text{.}
      \end{cases}
  \end{align*}
  Since $v \in W$ was arbitrary, this can be summarised as
  \begin{gather*}
    \lambda_s' \delta_w =
    \begin{cases}
      b_s \delta_{sw} & \text{if } \ell(sw) > \ell(w) \\
      a_s \delta_w + \delta_{sw} & \text{if } \ell(sw) < \ell(w) \text{.}
    \end{cases}
  \end{gather*}
  The formula for $\rho_s' \delta_w$ is obtained in a similar way.
\end{proof}

We are now ready to prove Theorem~\ref{thmintro:Hecke-generic} (and hence also Corollary~\ref{corintro:Hecke}).
\begin{theorem}
  \label{thm:center-generic-hecke-algebras}
  Assume that $(W,S)$ is of irreducible indefinite type. 
   Then the centre of $\HHH=\HHH(W, S, (a_s, b_s)_{s \in S})$ is trivial.
\end{theorem}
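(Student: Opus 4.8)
The plan is to reduce the statement to the Coxeter-group combinatorics encapsulated in Proposition~\ref{prop:U+Swinfinitenoncompacthyp} and Corollary~\ref{cor:U+Swinfinitecompacthyp}, which together give: for $(W,S)$ irreducible and \emph{not} of affine type and every $w\in W\setminus\{1\}$, the set $U^+_S(w)$ is infinite. (Indefinite type excludes the affine case, so translations never interfere.) Suppose $x=\sum_{w\in W}x_wT_w$ lies in the centre of $\HHH$. First I would dualise: for $\varphi\in\Hom_R(\HHH,R)$, centrality of $x$ means $\varphi(T_sx)=\varphi(xT_s)$ for all $s\in S$, i.e. $(\lambda_s'\varphi)(x)=(\rho_s'\varphi)(x)$; equivalently, thinking of the coefficient vector $(x_w)_w$ as a formal functional, $x$ is fixed by the dual operators in the sense that $\lambda_s$ and $\rho_s$ act the same on it. Concretely, I would extract from $T_sx=xT_s$, for a fixed $s\in S$ and a fixed $w\in W$, a relation between the coefficients $x_w$, $x_{sw}$, $x_{ws}$, $x_{sws}$ by comparing the $T_w$-coefficients (or by applying $\delta_w$ to both sides and using Lemma~\ref{lem:dual-action}).

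The key step is to analyse this relation in the case $\ell(sw)>\ell(w)$ but $\ell(ws)<\ell(w)$ — more usefully, I would specialise to $w'$ with $\ell(sw's)>\ell(w')$, i.e. $w\stackrel{s}{\leftarrow}sw's$ in the notation of the paper. Applying $\delta_{sw's}$ (say) to $T_sx=xT_s$ and using Lemma~\ref{lem:dual-action}, one gets that $x_{sw's}$ is, up to the nonzero scalar coming from the $b_s=1$ entries (for the Iwahori-Hecke normalisation $b_s=1$; in the generic case one must track the $b_s$ more carefully — see below), expressible in terms of $x_{w'}$ and possibly $x_{sw'}$, $x_{w's}$. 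The upshot I am aiming for: if $w\stackrel{s}{\leftarrow}w'$ and $x$ is central, then $x_{w'}$ is determined by (a combination of) coefficients at shorter-or-equal-length elements, and crucially, $x_{w'}\ne 0$ forces $x_w\ne 0$ — so the support of $x$ is "closed under going backwards along $\stackrel{s}{\leftarrow}$". Dually, I would argue that if $x_w\ne0$ then there is some $s$ with $x_{sws}\ne0$ and $\ell(sws)\ge\ell(w)$, i.e. along any maximal element $w$ of $\supp(x)$ (of maximal length, say) one cannot move up — but $U^+_S(w)$ infinite (Proposition~\ref{prop:U+Swinfinitenoncompacthyp}, Corollary~\ref{cor:U+Swinfinitecompacthyp}) produces an infinite $\stackrel{s_i}{\leftarrow}$-chain $w=w_0\stackrel{s_1}{\leftarrow}w_1\stackrel{s_2}{\leftarrow}\cdots$ with $\ell(w_n)\to\infty$, all of whose members must then lie in $\supp(x)$, contradicting the finiteness of $\supp(x)$. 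Hence $x$ is supported on $\{1\}$, and then $T_sx=xT_s$ for all $s$ trivially forces $x\in R\cdot T_1$, i.e. the centre is trivial.

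The main obstacle is the bookkeeping of the deformation parameters in the \emph{generic} (rather than Iwahori-Hecke) setting: the recursions from Lemma~\ref{lem:dual-action} mix a coefficient with coefficients at other elements, weighted by $a_s$ and $b_s$, and $b_s$ need not be invertible (indeed $R$ need not be a field). The clean implication "$x_{w'}\ne0\Rightarrow x_w\ne0$" has to be replaced by something that still propagates non-triviality up an infinite non-decreasing-length chain despite possible zero divisors. I expect the right move is: take $w\in\supp(x)$ of \emph{maximal length} (which exists, $\supp(x)$ being finite), and show that for the step $w\stackrel{s}{\leftarrow}sws$ with $\ell(sws)>\ell(w)$ appearing in the $U^+_S$-chain, the $\delta_{sws}$-coefficient of $T_sx=xT_s$ reads off as $\pm x_w=0$ (the higher-length terms $x_{sws},x_{sw}\text{ or }x_{ws}$ on the other side vanish by maximality of $\ell(w)$, and the surviving coefficient is exactly $x_w$ with coefficient $b_s=1$ in the Iwahori case, or $1$ in the generic normalisation where $b_s$ appears only with length-increasing steps). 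That is, at a \emph{maximal}-length element of the support the relevant $b_s$-coefficient is the invertible one (namely $1$, since the relevant product $T_sT_w$ or $T_wT_s$ is of length-increasing type), which dodges the non-invertibility issue entirely. Making this last paragraph precise — choosing the chain from $U^+_S(w)$, locating the length-increasing step, and matching it against the correct case of Lemma~\ref{lem:dual-action} — is the crux, and the rest is routine.
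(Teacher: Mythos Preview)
Your approach is the same as the paper's: pick $w\neq 1$ of maximal length in $\supp(x)$, invoke the infiniteness of $U^+_S(w)$ via Proposition~\ref{prop:U+Swinfinitenoncompacthyp} and Corollary~\ref{cor:U+Swinfinitecompacthyp}, and derive $x_w=0$ from $T_sx=xT_s$ together with Lemma~\ref{lem:dual-action}. Two points need sharpening before this goes through.

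First, the chain $w=w_0\stackrel{s_1}{\leftarrow}w_1\stackrel{s_2}{\leftarrow}\cdots$ need not begin with a strict length increase: there may be a string of flat steps $\ell(w_0)=\ell(w_1)=\cdots=\ell(w_{k-1})<\ell(w_k)$. Your sketch jumps straight to ``the step $w\stackrel{s}{\leftarrow}sws$ with $\ell(sws)>\ell(w)$'', but that step is from $w_{k-1}$, not from $w$. You must first propagate $x_{w_i}=x_{w_{i-1}}$ along the flat steps (this is where the exchange condition enters: one of $\ell(s_iw_{i-1})$, $\ell(w_{i-1}s_i)$ exceeds $\ell(w_{i-1})=\ell(w)$, and evaluating $\delta$ at that intermediate element --- with the longer-length coefficients killed by maximality of $\ell(w)$ --- gives the desired equality). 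Only after this do you know $x_{w_{k-1}}=x_w\neq 0$.

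Second, at the strict increase you propose to read off the $\delta_{sws}$-coefficient, but that is uninformative: with $\ell(s_kw_{k-1}s_k)=\ell(w_{k-1})+2$, all of $x_{s_kw_{k-1}s_k}$, $x_{s_kw_{k-1}}$, $x_{w_{k-1}s_k}$ vanish by maximality, and both sides of the identity from Lemma~\ref{lem:dual-action} are zero. The informative coefficient is the \emph{intermediate} one, $\delta_{w_{k-1}s_k}$: via $\rho'_{s_k}$ it yields $a_{s_k}x_{w_{k-1}s_k}+x_{w_{k-1}}=x_{w_{k-1}}$, while via $\lambda'_{s_k}$ it yields $b_{s_k}x_{s_kw_{k-1}s_k}=0$, whence $x_w=x_{w_{k-1}}=0$. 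Note also that your worry about non-invertible $b_s$ dissolves here: the only place $b_s$ appears is as a factor in front of the already-vanishing $x_{w_k}$.
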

\begin{proof}  
  Assume for a contradiction that $x \in\HHH$ is a central element that is nontrivial.  Since the set $\{w \in W \ | \ x_w \neq 0\}$ is finite, it contains an element of maximal length, say $w\neq 1$.  By Proposition~\ref{prop:U+Swinfinitenoncompacthyp} and Corollary~\ref{cor:U+Swinfinitecompacthyp}, the set $U^+_S(w)$ is infinite. In particular, there is a sequence $w=w_0\stackrel{s_1}{\leftarrow}w_1\stackrel{s_2}{\leftarrow}\dots \stackrel{s_k}{\leftarrow}w_k=w'$ with $\ell(w_0)=\ell(w_1)=\dots=\ell(w_{k-1})<\ell(w')$, which we may choose so that $w_{i-1}\neq w_i$ for all $i=1,\dots,k$.

By the exchange condition for Coxeter groups, for all $i=1,\dots,k$ we either have $\ell(w_{i-1}s_{i}) > \ell(w_{i-1})$ or $\ell(s_{i}w_{i-1}) > \ell(w_{i-1})$. We claim that $x_{w_i} = x_{w_{i-1}}$ for all $i=1,\dots, k-1$.  If $\ell(w_{i-1} s_{i}) > \ell(w_{i-1})$, so that $x_{w_{i-1} s_{i}}=0$ by maximality of $\ell(w_{i-1})=\ell(w)$, comparing the $w_{i-1} s_{i}$-coefficients of $x T_{s_{i}} = T_{s_{i}}x$ using Lemma~\ref{lem:dual-action} yields
  \begin{gather*}
    (x T_{s_{i}})_{w_{i-1} s_{i}}
    = \rho'_{s_{i}} \delta_{w_{i-1} s_{i}} (x)
    = a_{s_{i}} x_{w_{i-1} s_{i}} + x_{w_{i-1}}
    = x_{w_{i-1}}
  \end{gather*}
 and
  \begin{gather*}
    (T_{s_{i}} x)_{w_{i-1} s_{i}}
    = \lambda'_{s_{i}} \delta_{w_{i-1} s_{i}} (x)
    = a_{s_{i}} x_{w_{i-1} s_{i}} + x_{s_{i} w_{i-1} s_{i}}
    = x_{w_{i}},
  \end{gather*}
  so that indeed $x_{w_i} = x_{w_{i-1}}$. The case $\ell(s_{i}w_{i-1}) > \ell(w_{i-1})$ is similar.

It follows that $x_{w_{k-1}} = x_w$. Since $\ell(w') > \ell(w_{k-1})$, so that $\ell(s_kw_{k-1}s_k)>\ell(w_{k-1}s_k)>\ell(w_{k-1})$, we compute as above using Lemma~\ref{lem:dual-action} that
  \begin{gather*}
    (x T_{s_k})_{w_{k-1} s_k}
    = \rho'_{s_{k}} \delta_{w_{k-1} s_{k}} (x)
    = a_{s_{k}} x_{w_{k-1} s_{k}} + x_{w_{k-1}}
    = x_{w_{k-1}}
    = x_w
  \end{gather*}
  and 
  \begin{gather*}
    (T_{s_k}x)_{w_{k-1} s_k}
    = \lambda'_{s_k} \delta_{w_{k-1} s_k}(x)
    = b_{s_k} x_{s_k w_{k-1} s_k}
    = 0
    \text{.}
  \end{gather*}
  Hence $x_w=0$, contradicting the choice of $w$.
\end{proof}

\bibliographystyle{amsalpha} 
\bibliography{these} 

\end{document}